\newtheorem{theorem}{Theorem}
\newtheorem{corollary}[theorem]{Corollary}
\newtheorem{lemma}[theorem]{Lemma}
\newtheorem{proposition}[theorem]{Proposition}
\begin{document}

\title[Inverse Problem]{Necessary and Sufficient Conditions 
for Solvability of Inverse Problem for Dirac Operators with Discontinuous Coefficient}
\author[Kh. R. Mamedov and O. Akcay]{Khanlar R. Mamedov and Ozge Akcay$^{*}$}
\address{Science and Letters Faculty, Mathematics Department,
Mersin University, 33343, Turkey}
\email{hanlar@mersin.edu.tr and ozge.akcy@gmail.com}
\keywords{Dirac operator, eigenvalues and normalizing numbers, expansion formula, 
inverse problem, necessary and sufficient conditions}
\subjclass[2010]{34A55, 34L40.}
\thanks{$^{*}$corresponding author}
\date{}
\maketitle

\begin{abstract}
In this work, a complete solution of the inverse spectral problem for a class 
of Dirac differential equations system is given by spectral data (eigenvalues and normalizing numbers). 
As a direct problem, the eigenvalue problem is solved: the asymptotic formulas of eigenvalues, eigenfunctions and 
normalizing numbers of problem are obtained, spectral data is defined by the sets of eigenvalues 
and normalizing numbers. The expansion formula with respect to eigenfunctions is obtained.
Gelfand-Levitan-Marchenko equation is derived. 
The main theorem on necessary and sufficient conditions for the solvability of inverse spectral problem is 
proved and the algorithm of reconstruction of potential from spectral data is given.
\end{abstract}

\maketitle
\section{Introduction}
We consider the boundary value problem generated by Dirac differential equations system
\begin{equation}\label{1}
By'+\Omega(x)y=\lambda\rho(x)y, \ \ \ 0<x<\pi
\end{equation}
with boundary condition
\begin{equation}\label{2}
\begin{array}{c}
U(y):= y_{1}(0)=0, \\
\\
V(y):= \left(\lambda+h_{1}\right)y_{1}(\pi)+h_{2}y_{2}(\pi)=0,
\end{array}
\end{equation}
where
\[
B=\left( 
\begin{array}{cc}
0 & 1 \\ 
-1 & 0
\end{array}
\right) ,\ \ \ \ \Omega \left(x\right) =\left( 
\begin{array}{cc}
p(x) & q(x) \\ 
q(x) & -p(x)
\end{array}
\right) ,\ \ \ \ y=\left( 
\begin{array}{c}
y_{1}\left( x\right) \\ 
y_{2}\left( x\right)
\end{array}
\right), 
\]
$p\left( x\right),$ $q\left( x\right) $ are real valued functions in $L_{2}(0,\pi),$ 
$\lambda$ is a spectral parameter, 
\[
\rho(x)=\left\{\begin{array}{c}
1, \ \ \ \ \ 0\leq x\leq a, \\
\alpha, \ \ \ \ a<x\leq\pi,
\end{array}
\right.
\]
$1\neq\alpha>0$, $h_{1}$ and $h_{2}$ are real numbers, $h_{2}>0.$

In the finite interval, in the case of $\rho(x)\equiv 1$ in the equation (\ref{1}) and 
the potential function $\Omega(x)$ is continuous, the solvabilty of 
inverse problem according to two spectra was obtained in \cite{Gas-Dz} (in this work
the criterion was obtained for two sequences of real numbers to be the spectra 
of two boundary value problems of Dirac operator) and according to one spectrum  
and normalizing numbers was given in \cite{Dz}. 
The inverse problem contained spectral parameter in boundary condition by spectral function was studied in \cite{MaS}. 
Inverse spectral problems for Dirac operator with summable potential were worked in \cite{Al-Hr-My, My-Pu, Pu}. 
Reconstruction of Dirac operator from nodal data was carried out in \cite{Ch-Zh}. 
For Dirac operator with peculiarity inverse problem was found out in \cite{Pa}.
Using Weyl-Titschmarsh function, direct and inverse problems for Dirac type-system were studied in \cite{Sa3, Sa2, Sa1}. 
Solution of the inverse quasiperiodic problem for Dirac system was given in \cite{Na}.
Inverse problem for weighted Dirac equations was investigated in \cite{Wa}. 
For Dirac operator Ambarzumian-type theorems were proved in \cite{Ho, Ki, Ch-X}.
On a positive half line, inverse scattering problem for a system of Dirac
equations of order $2n$ was completely solved in \cite{Ga} and when boundary condition
contained spectral parameter, for Dirac operator, inverse scattering
problem was worked in \cite{Mam1, Mam2}.
The applications of Dirac differential equations
system has been widespread in various areas of physics, such as \cite{Ok1, Ok2, Sh2, Sh1, Th} 
since Dirac equation was discovered to be associated with nonlinear wave equation in \cite{Ab1}.

In this paper, as different from other studies, the boundary value problem (\ref{1}), (\ref{2}) 
has piecewise continuous coefficient, so the integral representation (not operator transformation)
for the solution of equation (\ref{1}) obtained in \cite{La1} is used.

This paper is organized as follows: In section 2, based on this integral representation, the asymptotic formulas of 
eigenvalues, eigenfunctions and normalizing numbers of the boundary value problem (\ref{1}), (\ref{2}) are investigated.
The completeness theorem with respect to eigenfunctions is proved. 
The spectral expansion formula is obtained and Parseval equality is given. In section 3, the main equation 
namely Gelfand-Levitan-Marchenko type equation is derived. In section 4, 
we show that the boundary value problem (\ref{1}), (\ref{2}) can be uniquely determined from its 
eigenvalues and normalizing numbers. Finally in section 5, the solution of inverse problem is obtained. 
Let's express this more clearly. We can state the inverse problem for a system of Dirac equations in the following way: 
let $\lambda_{n}$ and $\alpha_{n}$, $\left(n\in\mathbb{Z}\right)$ are respectively eigenvalues and normalizing numbers of boundary value problem (\ref{1}), (\ref{2}). Knowing the spectral data   
$\left\{\lambda_{n},\alpha_{n}\right\}$, $\left(n\in\mathbb{Z}\right)$ 
to indicate a method of determining the potential $\Omega(x)$ and to find necessary and sufficient 
conditions for $\left\{\lambda_{n},\alpha_{n}\right\}$, 
$\left(n\in\mathbb{Z}\right)$ to be the spectral data of a problem (\ref{1}),(\ref{2}), for this, we derive 
differential equation, Parseval equality and boundary conditions. The main theorem on the necessary and sufficient conditions
for the solvability of inverse problem is proved and then algorithm of the construction of the function $\Omega(x)$
by spectral data is given. Note that throughout this paper, we use the following notation: $\tilde{\phi}$ denotes
the transposed matrix of $\phi$.

\section{Preliminaries}
The inner product in Hilbert space $H=L_{2,\rho}(0,\pi;\mathbb{C}^{2})\oplus \mathbb{C}$ is defined by
\[
\left\langle Y,Z\right\rangle:=\int_{0}^{\pi}\left[y_{1}(x)\overline{z}_{1}(x)+
y_{2}(x)\overline{z}_{2}(x)\right]\rho(x)dx+\frac{1}{h_{2}}y_{3}\overline{z}_{3},
\] where 
\[
Y=\left( 
\begin{array}{c}
y_{1}(x) \\ 
y_{2}(x) \\ 
y_{3}
\end{array}
\right) \in H, \ \ \ \ \ \ \ \ Z=\left( 
\begin{array}{c}
z_{1}(x) \\ 
z_{2}(x) \\ 
z_{3}
\end{array}
\right) \in H. 
\]

Let us define 
\[
L(Y):=\left( 
\begin{array}{c}
l(y) \\ 
-h_{1}y_{1}(\pi)-h_{2}y_{2}(\pi)
\end{array}
\right) 
\]
with
\[
D(L)=\left\{ 
\begin{array}{c}
Y\left|\right.Y=\left( y_{1}(x),y_{2}(x),y_{3}\right)^{T} \in H,\
y_{1}(x),\ y_{2}(x)\in AC[0,\pi ], \\ 
y_{3}=y_{1}(\pi) ,\ l(y)\in L_{2,\rho}(0,\pi; \mathbb{C}^{2}),\
y_{1}(0)=0
\end{array}
\right\} 
\]
where
\[
l(y)=\frac{1}{\rho(x)}\left(\begin{array}{c}
y_{2}^{'}+p(x)y_{1}+q(x)y_{2}\\
-y_{1}^{'}+q(x)y_{1}-p(x)y_{2}
\end{array}\right) . 
\]
The boundary value problem (\ref{1}), (\ref{2}) is equivalent to equation $LY=\lambda Y.$

Let $\varphi(x,\lambda)$ and $\psi(x,\lambda)$ be solutions of the system (\ref{1}) satisfying
the initial conditions
\begin{equation}\label{b}
\varphi(0,\lambda)=\left(\begin{array}{c}
0 \\
-1
\end{array}\right), \ \ \ \psi(\pi,\lambda)=\left(
\begin{array}{c}
h_{2} \\
-\lambda-h_{1}
\end{array}
\right). 
\end{equation} 
The solution $\varphi(x,\lambda)$ has the following representation \cite{La1, Mam-Ak}
\begin{equation}\label{3}
\varphi(x,\lambda)=\varphi_{0}(x,\lambda)+\int_{0}^{\mu(x)}A(x,t)\left(\begin{array}{c}
\sin\lambda t \\
-\cos\lambda t
\end{array}
\right)dt,
\end{equation}
where 
\[\varphi_{0}(x,\lambda)=\left(\begin{array}{c}
\sin\lambda\mu(x) \\
-\cos\lambda\mu(x)
\end{array}\right), \ \ \ \ \mu(x)=\left\{\begin{array}{c}
x, \ \ \ \ \ \ \ \ \ \ \ \ \ \ \ \ \  0\leq x\leq a, \\
\alpha x-\alpha a+a, \ \ \ \ a<x\leq\pi,
\end{array}\right.
\]
$\left(A_{ij}\right)^{2}_{i,j=1}$ is quadratic matrix function, $A_{ij}(x,.)\in L_{2}(0,\pi), \ i,j=1,2$ 
for fixed $x\in[0,\pi]$ and $A(x,t)$
is solution of the problem
\[
BA'_{x}(x,t)+\rho(t)A'_{t}(x,t)B=-\Omega(x)A(x,t),
\]
\begin{equation}\label{4}
\Omega(x)=\rho(x)A(x,\mu(x))B-BA(x,\mu(x)),
\end{equation}
\[
A_{11}(x,0)=A_{21}(x,0)=0.
\] The formula (\ref{4}) gives the relation between the kernel $A(x,t)$
and the coefficient of $\Omega(x)$ of the equation (\ref{1}). 

The characteristic function $\Delta(\lambda)$ of the problem $L$ is 
\begin{equation}\label{5}
\Delta(\lambda):=W\left[\varphi(x,\lambda), \psi(x,\lambda)\right]=
\varphi_{2}(x,\lambda)\psi_{1}(x,\lambda)-\varphi_{1}(x,\lambda)\psi_{2}(x,\lambda),
\end{equation} 
where $W\left[\varphi(x,\lambda), \psi(x,\lambda)\right]$ is Wronskian of the solutions $\varphi(x,\lambda)$ and 
$\psi(x,\lambda)$ and independent of $x\in[0,\pi].$
The zeros of $\Delta(\lambda)$ coincide with the eigenvalues $\lambda_{n}$ of problem $L$. 
The functions $\varphi(x,\lambda)$ and $\psi(x,\lambda)$ are eigenfunctions and there exists a sequence $\beta_{n}$
such that
\begin{equation}\label{6}
\psi(x,\lambda_{n})=\beta_{n}\varphi(x,\lambda_{n}), \ \ \ \beta_{n}\neq 0.
\end{equation}

Normalizing numbers are
\[
\alpha_{n}:=\int_{0}^{\pi}\left(\left|\varphi_{1}(x,\lambda_{n})\right|^{2}+
\left|\varphi_{2}(x,\lambda_{n})\right|^{2}\right)\rho(x)dx+
\frac{1}{h_{2}}\left|\varphi_{1}(\pi,\lambda_{n})\right|^{2}.
\] 
The following relation holds \cite{Mam-Ak}:
\begin{equation}\label{7}
\dot{\Delta}(\lambda_{n})=\beta_{n}\alpha_{n},
\end{equation}
where $\dot{\Delta}(\lambda)=\frac{d}{d\lambda}\Delta(\lambda)$. 

\begin{theorem} \label{thm1}
i) The eigenvalues $\lambda_{n},$ $\left(n\in\mathbb{Z}\right)$ of boundary value problem (\ref{1}), (\ref{2}) are 
\begin{equation}\label{8}
\lambda_{n}=\lambda_{n}^{0}+\epsilon_{n}, \ \ \ \left\{\epsilon_{n}\right\}\in l_{2},
\end{equation}
where $\lambda_{n}^{0}=\frac{n\pi}{\mu(\pi)}$ are zeros of function $\lambda\sin\lambda\mu(\pi)$.
For the large $n$, the eigenvalues are simple. \\

ii) The eigenfunctions of the boundary value problem can be represented in the form
\begin{equation}\label{9}
\varphi(x,\lambda_{n})=\left(\begin{array}{c}
\sin\frac{n\pi\mu(x)}{\mu(\pi)} \\
-\cos\frac{n\pi\mu(x)}{\mu(\pi)}
\end{array}
\right)+\left(\begin{array}{c}
\zeta_{n}^{(1)}(x) \\
\zeta_{n}^{(2)}(x) 
\end{array}
\right),
\end{equation}
where $\sum_{n=-\infty}^{\infty}\left\{\left|\zeta_{n}^{(1)}(x)\right|^{2}+\left|
\zeta_{n}^{(2)}(x)\right|^{2}\right\}\leq C,$ in here $C$ is a positive number. \\

iii)Normalizing numbers of the problem (\ref{1}), (\ref{2}) are as follows
\begin{equation}\label{10}
\alpha_{n}=\mu(\pi)+\tau_{n}, \ \ \ \left\{\tau_{n}\right\}\in l_{2}.
\end{equation}
\end{theorem}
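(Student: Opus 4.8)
The plan is to prove all three parts by exploiting the integral representation \eqref{3} to reduce the spectral asymptotics to a perturbation of the unperturbed problem with $\Omega\equiv 0$. I would begin with part (i). First I would compute the characteristic function $\Delta(\lambda)$ explicitly by substituting the representation \eqref{3} for $\varphi(x,\lambda)$ together with the prescribed boundary data of $\psi(x,\lambda)$ from \eqref{b} into the definition \eqref{5}. Since $\Delta(\lambda)$ is independent of $x$, I would evaluate it at $x=\pi$, where $\psi(\pi,\lambda)=(h_2,-\lambda-h_1)^{T}$ is known. This yields
\begin{equation*}
\Delta(\lambda)=(\lambda+h_1)\varphi_1(\pi,\lambda)+h_2\varphi_2(\pi,\lambda),
\end{equation*}
and after inserting \eqref{3} and integrating by parts in the term $\int_0^{\mu(\pi)}A(\pi,t)(\sin\lambda t,-\cos\lambda t)^T\,dt$, I would extract the leading term $\Delta_0(\lambda)=\lambda\sin\lambda\mu(\pi)$ (up to a nonzero constant factor) plus a remainder that is $o(\lambda e^{|\mathrm{Im}\lambda|\mu(\pi)})$ on account of the $L_2$ membership of the kernel $A(\pi,\cdot)$ and the Riemann--Lebesgue lemma. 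The zeros $\lambda_n^0=n\pi/\mu(\pi)$ of $\Delta_0$ are then perturbed to the actual zeros $\lambda_n$. To make the $l_2$ estimate \eqref{8} precise rather than merely $o(1)$, I would write the integral remainder using the Fourier-coefficient structure of $A(\pi,\cdot)\in L_2(0,\mu(\pi))$, so that the perturbations $\epsilon_n$ inherit square-summability from Bessel's inequality. Applying Rouché's theorem on small circles about each $\lambda_n^0$ (for $|n|$ large) gives simplicity of the eigenvalues and the existence of exactly one zero per circle.

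For part (ii), I would substitute $\lambda=\lambda_n$ into the representation \eqref{3}. The unperturbed vector $\varphi_0(x,\lambda_n)$ becomes, after expanding $\lambda_n=\lambda_n^0+\epsilon_n$ with $\lambda_n^0\mu(x)=n\pi\mu(x)/\mu(\pi)$, equal to $(\sin(n\pi\mu(x)/\mu(\pi)),-\cos(n\pi\mu(x)/\mu(\pi)))^T$ plus an error controlled by $|\epsilon_n|$ through the Lipschitz bound $|\sin\lambda_n\mu(x)-\sin\lambda_n^0\mu(x)|\le\mu(x)|\epsilon_n|$. Collecting this error together with the integral term $\int_0^{\mu(x)}A(x,t)(\sin\lambda_n t,-\cos\lambda_n t)^T\,dt$ into the vector $(\zeta_n^{(1)}(x),\zeta_n^{(2)}(x))^T$, I would then establish the uniform-in-$x$ summability bound $\sum_n\{|\zeta_n^{(1)}(x)|^2+|\zeta_n^{(2)}(x)|^2\}\le C$. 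The $\epsilon_n$-part is handled by $\{\epsilon_n\}\in l_2$ from part (i), while the integral part is again a Fourier-type coefficient of the fixed $L_2$ kernel $A(x,\cdot)$, hence square-summable by Bessel's inequality with the constant $C$ uniform in $x$ after using that $\|A(x,\cdot)\|_{L_2}$ is bounded on $[0,\pi]$.

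For part (iii), I would start from the exact identity \eqref{7}, namely $\dot\Delta(\lambda_n)=\beta_n\alpha_n$, and combine it with the relation \eqref{6}. Differentiating the asymptotic expression for $\Delta(\lambda)$ obtained in part (i) gives $\dot\Delta(\lambda_n)=\mu(\pi)\cos\lambda_n\mu(\pi)\cdot\lambda_n+\dots$, whose leading behavior at the perturbed zeros I would evaluate; simultaneously, computing $\beta_n$ from \eqref{6} by matching the two eigenfunction representations (the $\psi$-side evaluated at $x=\pi$ via \eqref{b} against $\beta_n\varphi(\pi,\lambda_n)$) yields a leading value for $\beta_n$. Dividing then produces $\alpha_n=\mu(\pi)+\tau_n$ with $\{\tau_n\}\in l_2$, where the $l_2$ remainder aggregates the square-summable contributions already identified. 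Alternatively, and perhaps more cleanly, I would insert the eigenfunction representation \eqref{9} from part (ii) directly into the definition of $\alpha_n$ and integrate: the main term is $\int_0^\pi(\sin^2+\cos^2)\rho\,dx=\mu(\pi)$, while the cross terms and the $|\zeta_n|^2$ terms form $\tau_n$, whose $l_2$ character follows from the bound in part (ii) together with orthogonality-type cancellations of the oscillatory integrals. The main obstacle throughout will be upgrading the soft $o(1)$ decay from the Riemann--Lebesgue lemma to the quantitative $l_2$ bounds demanded by \eqref{8} and \eqref{10}; this hinges on recognizing each remainder as a genuine Fourier coefficient of a fixed $L_2$ function so that Bessel's inequality applies, and on carefully tracking that the discontinuity of $\rho$ at $x=a$ (encoded in $\mu$) does not destroy the self-adjoint/orthogonality structure used in these cancellations.
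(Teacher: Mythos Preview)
The paper does not actually prove this theorem: its entire proof reads ``The proof of this theorem is similarly obtained in \cite{Mam-Ak}.'' Immediately afterward the paper records the asymptotics \eqref{11} and \eqref{12}, which match exactly the leading-term computation you describe for $\Delta(\lambda)$. So there is no ``paper's proof'' to compare against beyond that single-line citation.

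Your outline is the standard argument one expects to find in the cited reference and is essentially correct: evaluate $\Delta(\lambda)$ at $x=\pi$ via \eqref{3}, extract the leading part $\lambda\sin\lambda\mu(\pi)$, apply Rouch\'e on circles about $\lambda_n^0$, and upgrade the remainder estimates to $l_2$ by recognizing them as Fourier coefficients of the fixed $L_2$ kernel $A(\pi,\cdot)$ (resp.\ $A(x,\cdot)$). One small caution on part (iii): the ``direct'' route of inserting \eqref{9} into the definition of $\alpha_n$ and integrating only gives $\{\tau_n\}\in l_2$ if you control the cross terms $\int_0^\pi \tilde\varphi_0(x,\lambda_n^0)\zeta_n(x)\rho(x)\,dx$ individually in $l_2$, not merely $\sum_n|\zeta_n|^2\le C$ uniformly in $x$; the latter bound alone would only give $|\tau_n|\le C$, not $\sum\tau_n^2<\infty$. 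You should therefore track the $\zeta_n$ more finely---splitting them into the $O(|\epsilon_n|)$ piece from the Taylor expansion of $\varphi_0$ and the Fourier-coefficient piece from the integral term, each of which is genuinely $l_2$ in $n$---rather than relying on the aggregate bound from part (ii). With that refinement your plan goes through.
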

\begin{proof}
The proof of this theorem is similarly obtained in \cite{Mam-Ak}.
\end{proof}

We note that using (\ref{3}), as $\left|\lambda\right|\rightarrow\infty$ uniformly in $x\in[0,\pi]$ the following asymptotic 
formulas are obtain:
\begin{equation}\label{11}
\begin{array}{c}
\varphi_{1}(x,\lambda)=\sin\lambda\mu(x)+O\left(\frac{1}{\left|\lambda\right|}e^{\left|Im\lambda\right|\mu(x)}\right), \\
\\
\varphi_{2}(x,\lambda)=-\cos\lambda\mu(x)+O\left(\frac{1}{\left|\lambda\right|}e^{\left|Im\lambda\right|\mu(x)}\right).
\end{array}
\end{equation}
Substituting the asymptotic formulas (\ref{11}) into
\[
\Delta(\lambda)=\left(\lambda+h_{1}\right)\varphi_{1}(\pi,\lambda)+h_{2}\varphi_{2}(\pi,\lambda),
\] we get as $\left|\lambda\right|\rightarrow\infty$
\begin{equation}\label{12}
\Delta(\lambda)=\lambda\sin\lambda\mu(\pi)+O\left(e^{\left|Im\lambda\right|\mu(\pi)}\right).
\end{equation}
\begin{proposition} \label{p1}
The specification of the eigenvalues $\lambda_{n}, \ (n\in\mathbb{Z})$ uniquely determines the
characteristic function $\Delta(\lambda)$ by formula 
\begin{equation}\label{a}
\Delta(\lambda)=-\mu(\pi)(\lambda_{0}^{2}-\lambda^{2})\prod_{n=1}^{\infty}
\frac{(\lambda_{n}^{2}-\lambda^{2})}{(\lambda_{n}^{0})^{2}}.
\end{equation}
\end{proposition}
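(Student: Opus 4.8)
The plan is to recover $\Delta(\lambda)$ as an entire function from its zeros via a Hadamard-type factorization, and then to pin down the unknown multiplicative constant by comparing growth at infinity against the asymptotic formula already established. By Theorem~\ref{thm1}, the eigenvalues $\lambda_n$ are the zeros of $\Delta(\lambda)$, and from \eqref{12} we know $\Delta(\lambda)$ grows like $\lambda\sin\lambda\mu(\pi)$. Since the comparison function $\lambda\sin\lambda\mu(\pi)$ has zeros exactly at $\lambda_n^0=\tfrac{n\pi}{\mu(\pi)}$ (the zero at $n=0$ being a double zero, matching the factor $\lambda$), the natural strategy is to form the ratio of $\Delta(\lambda)$ to a known canonical product built from the $\lambda_n$ and argue this ratio is constant.

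First I would establish that $\Delta(\lambda)$ is entire of order $1$ in $\lambda$; this follows from \eqref{12} together with the integral representation \eqref{3}, which exhibits $\varphi(x,\lambda)$ as a sum of exponential-type terms, so $\Delta$ is of exponential type $\mu(\pi)$. Next, I would use the standard product expansion of $\lambda\sin\lambda\mu(\pi)$, namely
\[
\lambda\sin\lambda\mu(\pi)=\lambda^{2}\mu(\pi)\prod_{n=1}^{\infty}\frac{(\lambda_n^{0})^{2}-\lambda^{2}}{(\lambda_n^{0})^{2}},
\]
as the reference factorization. The displayed formula \eqref{a} is precisely this product with each reference zero $\lambda_n^0$ replaced by the true eigenvalue $\lambda_n$ and the leading sign and constant adjusted; so I would define $G(\lambda)$ to be the right-hand side of \eqref{a} and verify it is entire of order $1$, with the same zero set as $\Delta(\lambda)$, using the asymptotics $\lambda_n=\lambda_n^0+\epsilon_n$ with $\{\epsilon_n\}\in l_2$ from \eqref{8} to guarantee convergence of the product.

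Then I would consider the quotient $\Delta(\lambda)/G(\lambda)$. Since $\Delta$ and $G$ share the same zeros with the same multiplicities, this quotient is entire with no zeros; an order-$1$ estimate on both numerator and denominator, via a Phragm\'en--Lindel\"of / Liouville argument applied along rays avoiding the zeros, shows the quotient is bounded and hence constant. The value of this constant is then determined by examining the leading behavior as $|\lambda|\to\infty$: comparing the $\lambda\sin\lambda\mu(\pi)$ leading term of $\Delta$ from \eqref{12} against the corresponding leading term of $G$ fixes the constant to be $1$, which yields \eqref{a} with the stated sign $-\mu(\pi)$.

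The main obstacle I anticipate is the delicate comparison estimate that forces $\Delta(\lambda)/G(\lambda)$ to be constant: one must control $|\Delta(\lambda)|$ from below and $|G(\lambda)|$ from below away from the eigenvalues, which requires showing that the perturbed product $G(\lambda)$ has the same exponential type and the same asymptotic size as $\lambda\sin\lambda\mu(\pi)$ up to a bounded factor. This hinges on the $l_2$ control of $\{\epsilon_n\}$ and a careful estimate of $\prod_n \bigl(1+\tfrac{\epsilon_n(\,\cdot\,)}{(\lambda_n^0)^2-\lambda^2}\bigr)$ uniformly on circles of radius $|\lambda|=R_N$ chosen midway between consecutive eigenvalues. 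Once that uniform two-sided bound is in hand, the constancy and the evaluation of the constant are routine.
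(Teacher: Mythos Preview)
Your proposal is correct and follows essentially the same approach as the paper: invoke Hadamard's factorization theorem for the entire function $\Delta(\lambda)$ of order one and use the asymptotic \eqref{12} to identify the constant, which is precisely what the paper does (in a single sentence citing Hadamard and \eqref{12}). Your version simply spells out the details that the paper leaves implicit, including the comparison with the canonical product for $\lambda\sin\lambda\mu(\pi)$ and the boundedness of the quotient.
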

\begin{proof}
Since the function $\Delta(\lambda)$ is entire function, from Hadamard's theorem (see \cite{Le}),
using (\ref{12}) we obtain (\ref{a}).
\end{proof}
\begin{theorem} \label{thm2}
a) The system of eigenfunctions $\varphi(x,\lambda_{n})$, $(n\in \mathbb{Z})$ of boundary value problem (\ref{1}), (\ref{2}) 
is complete in space $L_{2,\rho}(0,\pi;\mathbb{C}^{2})$. \\

b) Let $f(x)\in D(L).$  Then the expansion formula
\begin{equation}\label{16}
f(x)=\sum_{n=-\infty}^{\infty}a_{n}\varphi(x,\lambda_{n}),
\end{equation}
\[
a_{n}=\frac{1}{\alpha_{n}}\left\langle f(x), \varphi(x,\lambda_{n})\right\rangle
\] is valid and the series converges uniformly with respect to $x\in[0,\pi].$ 
For $f(x)\in L_{2,\rho}(0,\pi;\mathbb{C}^{2})$ series (\ref{16}) converges in $L_{2,\rho}(0,\pi;\mathbb{C}^{2})$, moreover,
Parseval equality holds
\begin{equation}\label{17}
\left\|f\right\|^{2}=\sum_{n=-\infty}^{\infty}\alpha_{n}\left|a_{n}\right|^{2}.
\end{equation}
\end{theorem}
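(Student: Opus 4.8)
The plan is to run the classical contour–integration (residue) method: I build the resolvent of $L$, read off the eigenfunction expansion from the residues of its kernel, and prove that integrating the resolvent over expanding contours reproduces $f$; completeness and the Parseval identity are then corollaries. Throughout, each eigenfunction is regarded as the element $\varphi(\cdot,\lambda_n)=(\varphi_1,\varphi_2,\varphi_1(\pi,\lambda_n))^{T}\in H$, whose first two components give the expansion in $L_{2,\rho}(0,\pi;\mathbb{C}^2)$. Two structural facts come first. The operator $L$ is self-adjoint in $H$ --- the boundary component of $L(Y)$ and the weight $1/h_2$ in $\langle\cdot,\cdot\rangle$ are arranged precisely so that $\langle LY,Z\rangle=\langle Y,LZ\rangle$ --- hence the $\lambda_n$ are real and eigenfunctions for distinct eigenvalues are orthogonal in $H$. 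Moreover, comparing the definition of $\alpha_n$ with the inner product gives $\alpha_n=\langle\varphi(\cdot,\lambda_n),\varphi(\cdot,\lambda_n)\rangle=\|\varphi(\cdot,\lambda_n)\|_H^2$, so $\{\alpha_n^{-1/2}\varphi(\cdot,\lambda_n)\}_{n\in\mathbb{Z}}$ is an orthonormal system; the task is to show it is complete.

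Next I would construct the Green's function from the solutions $\varphi$ (left condition) and $\psi$ (right condition) and $\Delta(\lambda)=W[\varphi,\psi]$,
\[
G(x,t,\lambda)=\frac{1}{\Delta(\lambda)}\begin{cases}\varphi(x,\lambda)\,\tilde\psi(t,\lambda),&0\le x\le t,\\ \psi(x,\lambda)\,\tilde\varphi(t,\lambda),&t\le x\le\pi,\end{cases}
\]
so that $Y(x,\lambda)=\int_0^\pi G(x,t,\lambda)f(t)\rho(t)\,dt$ is the resolvent of $f$, built in $H$ so as to incorporate the boundary contribution forced by the $\lambda$-dependent condition. For large $|n|$ the zeros of $\Delta$ are simple, so $Y(\cdot,\lambda)$ is meromorphic with simple poles at the $\lambda_n$; by (\ref{6}) and (\ref{7}),
\[
\operatorname*{Res}_{\lambda=\lambda_n}G(x,t,\lambda)=\frac{\varphi(x,\lambda_n)\tilde\varphi(t,\lambda_n)}{\alpha_n},\qquad \operatorname*{Res}_{\lambda=\lambda_n}Y(x,\lambda)=a_n\,\varphi(x,\lambda_n),
\]
with $a_n=\alpha_n^{-1}\langle f,\varphi(\cdot,\lambda_n)\rangle$ (the boundary term of the inner product entering through the $\lambda$-dependence of the condition at $\pi$).

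I would then integrate $Y(x,\lambda)$ over the contours $\Gamma_N=\{|\lambda|=r_N\}$, with $r_N\to\infty$ chosen by (\ref{8}) to stay a fixed distance from every eigenvalue. The residue theorem yields
\[
\frac{1}{2\pi i}\oint_{\Gamma_N}Y(x,\lambda)\,d\lambda=\sum_{|n|\le N}a_n\,\varphi(x,\lambda_n),
\]
the $N$th partial sum of (\ref{16}). The crux is to show the left-hand side tends to $f(x)$, uniformly on $[0,\pi]$ for $f\in D(L)$. I would prove this by comparison with the unperturbed problem $\Omega\equiv0$, whose Green's function $G_0$ has eigenfunctions $\varphi_0(x,\lambda_n^0)$ and for which the same contour integral is the ordinary Fourier expansion in the variable $\mu(x)$, known to converge uniformly to $f$ for $f\in D(L)$. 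Using the lower bound $|\Delta(\lambda)|\ge C|\lambda|e^{|\mathrm{Im}\,\lambda|\mu(\pi)}$ on $\Gamma_N$ (from (\ref{12})) together with the representation (\ref{3}) and the asymptotics (\ref{11}), the difference $G-G_0$ carries an extra factor $O(|\lambda|^{-1})$ on $\Gamma_N$, so $\frac{1}{2\pi i}\oint_{\Gamma_N}(Y-Y_0)\,d\lambda\to0$. These uniform resolvent estimates away from the spectrum, together with the integration by parts that exploits $f\in D(L)$ to secure the decay, are the main technical obstacle.

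Finally I would collect the consequences. Uniform convergence of (\ref{16}) for every $f\in D(L)$ is the first assertion of (b); since the projections onto the first two components of the elements of $D(L)$ are dense in $L_{2,\rho}(0,\pi;\mathbb{C}^2)$, a function orthogonal to all $\varphi(\cdot,\lambda_n)$ would have vanishing expansion and hence be $0$, which is the completeness in (a). Consequently the orthonormal system $\{\alpha_n^{-1/2}\varphi(\cdot,\lambda_n)\}$ is an orthonormal basis of $H$, so for arbitrary $f\in L_{2,\rho}(0,\pi;\mathbb{C}^2)$ the series (\ref{16}) converges in $L_{2,\rho}$, and the Parseval identity (\ref{17}) is the basis expansion of $\|f\|^2$: from $\langle f,\varphi(\cdot,\lambda_n)\rangle=\alpha_n a_n$ one gets $\|f\|^2=\sum_n\alpha_n^{-1}|\langle f,\varphi(\cdot,\lambda_n)\rangle|^2=\sum_n\alpha_n|a_n|^2$.
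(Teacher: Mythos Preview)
The paper does not give an argument here; its proof is simply the one-line reference ``analogously proved in \cite{Mam-Ak}''. Your proposal is the standard contour-integration/Green's function method for eigenvalue problems with $\lambda$-dependent boundary conditions, which is precisely the technique used in that cited work: build the resolvent kernel from the pair $\varphi,\psi$ and the Wronskian $\Delta$, identify the residues via (\ref{6})--(\ref{7}) as the Fourier coefficients, and prove convergence of the contour integral over $\Gamma_N$ by comparison with the free system $\Omega\equiv 0$, using the asymptotics (\ref{11}), (\ref{12}) and the lower bound $|\Delta(\lambda)|\ge C_\delta|\lambda|e^{|\mathrm{Im}\,\lambda|\mu(\pi)}$ away from the zeros. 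So in substance your route coincides with the paper's intended one.

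Two small points worth tightening. First, Theorem~\ref{thm1} only asserts simplicity of $\lambda_n$ for large $|n|$; for the finitely many possibly non-simple eigenvalues the residue at a higher-order zero of $\Delta$ is still the spectral projection onto the corresponding (finite-dimensional) eigenspace, by self-adjointness of $L$ in $H$, so the argument survives with a routine modification. Second, your passage from ``orthonormal basis of $H$'' to ``completeness in $L_{2,\rho}(0,\pi;\mathbb{C}^2)$'' needs one more sentence: an $f\in L_{2,\rho}(0,\pi;\mathbb{C}^2)$ orthogonal to every $\varphi(\cdot,\lambda_n)$ in the $L_{2,\rho}$ inner product is, when lifted to $H$ with third component $0$, orthogonal in $H$ to every $\varphi(\cdot,\lambda_n)$ (the boundary term contributes $0$), hence $f=0$. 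With these clarifications your sketch is complete.
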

\begin{proof}
This theorem is analogously proved in \cite{Mam-Ak}.  
\end{proof}

\section{Main Equation}
\begin{theorem} \label{thm3}
For each fixed $x\in(0,\pi]$ the kernel $A(x,t)$ from the representation (\ref{3}) satisfies the following equation
\begin{equation}\label{28}
A(x,\mu(t))+F(x,t)+\int_{0}^{\mu(x)}A(x,\xi)F_{0}(\xi,t)d\xi=0, \ \ \ \ 0<t<x,
\end{equation}
where
\begin{equation}\label{29}
F_{0}(x,t)=\sum_{n=-\infty}^{\infty}\left[\frac{1}{\alpha_{n}}\left(\begin{array}{c}
\sin\lambda_{n}x \\
-\cos\lambda_{n}x
\end{array}\right)\tilde{\varphi}_{0}(t,\lambda_{n})-
\frac{1}{\mu(\pi)}\left(\begin{array}{c}
\sin\lambda_{n}^{0}x \\
-\cos\lambda_{n}^{0}x
\end{array}\right)\tilde{\varphi}_{0}(t,\lambda_{n}^{0})\right]
\end{equation}
and
\begin{equation} \label{30}
F(x,t)=F_{0}(\mu(x),t).
\end{equation} 
\end{theorem}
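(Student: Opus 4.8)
The plan is to form, for each $N$, the truncated difference $\Phi_{N}(x,t):=\Psi_{N}(x,t)-\Phi^{0}_{N}(x,t)$ of the spectral kernels attached to the problem $L$ and to the ``free'' problem ($\Omega\equiv 0$, eigenvalues $\lambda_{n}^{0}$, normalizing numbers $\mu(\pi)$), where $\Psi_{N}(x,t):=\sum_{|n|\le N}\frac{1}{\alpha_{n}}\varphi(x,\lambda_{n})\tilde{\varphi}_{0}(t,\lambda_{n})$ and $\Phi^{0}_{N}(x,t):=\sum_{|n|\le N}\frac{1}{\mu(\pi)}\varphi_{0}(x,\lambda_{n}^{0})\tilde{\varphi}_{0}(t,\lambda_{n}^{0})$, and then to compute $\lim_{N\to\infty}\Phi_{N}(x,t)$ in two independent ways. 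It is convenient to set $e(\xi,\lambda):=\left(\begin{array}{c}\sin\lambda\xi\\ -\cos\lambda\xi\end{array}\right)$, so that $\varphi_{0}(x,\lambda)=e(\mu(x),\lambda)$ and (\ref{3}) reads $\varphi(x,\lambda)=e(\mu(x),\lambda)+\int_{0}^{\mu(x)}A(x,\xi)\,e(\xi,\lambda)\,d\xi$, while (\ref{29}) becomes $F_{0}(x,t)=\sum_{n}\left[\frac{1}{\alpha_{n}}e(x,\lambda_{n})\tilde{\varphi}_{0}(t,\lambda_{n})-\frac{1}{\mu(\pi)}e(x,\lambda_{n}^{0})\tilde{\varphi}_{0}(t,\lambda_{n}^{0})\right]$. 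I stress that one must work with the difference $\Phi_{N}$ throughout: by (\ref{8}),(\ref{10}) the sequences $\frac{1}{\alpha_{n}}-\frac{1}{\mu(\pi)}$ and $\lambda_{n}-\lambda_{n}^{0}$ lie in $l_{2}$, so the difference series (\ref{29}) converges in $L_{2}$, whereas $\Psi_{N}$ and $\Phi^{0}_{N}$ separately admit only a distributional limit.

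First evaluation (substitution). Inserting the rewritten representation (\ref{3}) into $\Psi_{N}$ splits it as $\Psi_{N}(x,t)=\sum_{|n|\le N}\frac{1}{\alpha_{n}}e(\mu(x),\lambda_{n})\tilde{\varphi}_{0}(t,\lambda_{n})+\int_{0}^{\mu(x)}A(x,\xi)\left(\sum_{|n|\le N}\frac{1}{\alpha_{n}}e(\xi,\lambda_{n})\tilde{\varphi}_{0}(t,\lambda_{n})\right)d\xi$. Subtracting $\Phi^{0}_{N}$, the first sum combines with $\Phi^{0}_{N}$ into the truncation of (\ref{29}) at $x$, which tends to $F_{0}(\mu(x),t)=F(x,t)$; inside the integral I again add and subtract $\frac{1}{\mu(\pi)}e(\xi,\lambda_{n}^{0})\tilde{\varphi}_{0}(t,\lambda_{n}^{0})$, so the integrand splits into the truncation of $F_{0}(\xi,t)$ (giving $\int_{0}^{\mu(x)}A(x,\xi)F_{0}(\xi,t)\,d\xi$ in the limit, by $L_{2}$-convergence in $\xi$ and $A(x,\cdot)\in L_{2}$) plus the free reproducing kernel $\sum_{|n|\le N}\frac{1}{\mu(\pi)}e(\xi,\lambda_{n}^{0})\tilde{\varphi}_{0}(t,\lambda_{n}^{0})$. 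Because $\lambda_{n}^{0}=n\pi/\mu(\pi)$, a direct evaluation of the four matrix entries (the two off-diagonal sine--cosine sums vanish by oddness in $n$, while the diagonal sine--sine and cosine--cosine sums form the periodic Dirichlet kernel) identifies this last kernel, in the distributional sense on $(0,\mu(\pi))$, with $\delta(\xi-\mu(t))I$; since $\mu(t)\in(0,\mu(x))$ for $0<t<x$, integrating it against $A(x,\cdot)$ reproduces $A(x,\mu(t))$. Hence $\lim_{N}\Phi_{N}(x,t)=A(x,\mu(t))+F(x,t)+\int_{0}^{\mu(x)}A(x,\xi)F_{0}(\xi,t)\,d\xi$.

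Second evaluation (residues) --- the main obstacle. By (\ref{6}) and (\ref{7}), $\frac{1}{\alpha_{n}}\varphi(x,\lambda_{n})=\psi(x,\lambda_{n})/\dot{\Delta}(\lambda_{n})$, so each summand of $\Psi_{N}$ is the residue at $\lambda_{n}$ of $\psi(x,\lambda)\tilde{\varphi}_{0}(t,\lambda)/\Delta(\lambda)$; choosing radii $R_{N}$ between consecutive eigenvalues (the finitely many small, possibly non-simple, eigenvalues do not affect the residue count) gives $\Phi_{N}(x,t)=\frac{1}{2\pi i}\oint_{|\lambda|=R_{N}}\left[\frac{\psi(x,\lambda)\tilde{\varphi}_{0}(t,\lambda)}{\Delta(\lambda)}-\frac{\psi_{0}(x,\lambda)\tilde{\varphi}_{0}(t,\lambda)}{\Delta_{0}(\lambda)}\right]d\lambda$, with $\psi_{0},\Delta_{0}$ the free quantities. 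Each kernel separately is only $O\!\left(e^{|Im\lambda|(\mu(t)-\mu(x))}\right)$ on the contour, which fails to decay near the real axis; the key is that in the \emph{difference} the leading terms cancel, and writing $\psi=\psi_{0}+O(\psi_{0}/\lambda)$, $\Delta=\Delta_{0}+O(\Delta_{0}/\lambda)$ from (\ref{11}),(\ref{12}) one gains an extra factor, so the integrand is $O\!\left(|\lambda|^{-1}e^{|Im\lambda|(\mu(t)-\mu(x))}\right)$. For $0<t<x$ we have $\mu(t)-\mu(x)<0$, and this bound forces $\oint_{|\lambda|=R_{N}}\to 0$ as $N\to\infty$. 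Making this rigorous is the hardest step: it needs the lower estimate $|\Delta(\lambda)|\ge C|\lambda|e^{|Im\lambda|\mu(\pi)}$ on the circles $|\lambda|=R_{N}$ (from (\ref{12}) and the product formula (\ref{a})), the matching growth bound for $\psi$, and a separate treatment of the thin strip about the real axis, where the exponential gain degenerates and only the extra $|\lambda|^{-1}$ together with oscillation secures the decay. Thus $\lim_{N}\Phi_{N}(x,t)=0$ for $0<t<x$.

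Comparing the two evaluations yields $A(x,\mu(t))+F(x,t)+\int_{0}^{\mu(x)}A(x,\xi)F_{0}(\xi,t)\,d\xi=0$ for each fixed $x\in(0,\pi]$ and $0<t<x$, which is exactly (\ref{28}); the relation (\ref{30}), $F(x,t)=F_{0}(\mu(x),t)$, is built into the first evaluation. The interchange of the finite sums $\sum_{|n|\le N}$ with $\int_{0}^{\mu(x)}d\xi$ is harmless for fixed $N$, and every passage to the limit is taken in the $L_{2}$-topology furnished by the $l_{2}$-asymptotics of Theorem \ref{thm1}, the single pointwise reproduction $A(x,\mu(t))$ being justified by the continuity of $A(x,\cdot)$.
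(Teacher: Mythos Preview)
Your first evaluation is essentially correct and corresponds to the paper's terms $I_{N1},I_{N2},I_{N3}$; the paper carries this out weakly, against an arbitrary test function $f(t)\rho(t)$, rather than invoking a distributional $\delta(\xi-\mu(t))I$, which sidesteps the issue that $A(x,\cdot)$ is a priori only in $L_{2}$ and not known to be continuous.

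The genuine gap is in your second evaluation. You write $\Phi^{0}_{N}$ as the residue sum of $\psi_{0}(x,\lambda)\tilde\varphi_{0}(t,\lambda)/\Delta_{0}(\lambda)$ ``with $\psi_{0},\Delta_{0}$ the free quantities,'' but these two requirements are incompatible. The reference data you are comparing to are $\lambda_{n}^{0}=n\pi/\mu(\pi)$ with normalizing numbers $\mu(\pi)$; yet the problem with $\Omega\equiv 0$ and the \emph{same} boundary condition (\ref{2}) has characteristic function $(\lambda+h_{1})\sin\lambda\mu(\pi)-h_{2}\cos\lambda\mu(\pi)$, whose zeros are not the $\lambda_{n}^{0}$. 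Thus there is no natural pair $(\psi_{0},\Delta_{0})$ for which both the residue identity \emph{and} the asymptotic matching $\psi=\psi_{0}+O(\psi_{0}/\lambda)$, $\Delta=\Delta_{0}+O(\Delta_{0}/\lambda)$ hold, so the claimed cancellation in $\psi/\Delta-\psi_{0}/\Delta_{0}$ is not justified. You also concede that the strip about the real axis needs a separate argument and do not give one; without the extra $|\lambda|^{-1}$ this strip contributes $O(1)$, and your contour integral does not tend to zero.

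The paper handles this step differently. It forms a \emph{different} $\Phi_{N}$, built with the full $\tilde\varphi(t,\lambda_{n})$ (and $\varphi(x,\lambda_{n}^{0})$) rather than $\tilde\varphi_{0}$, and shows $\int_{0}^{\pi}\Phi_{N}(x,t)f(t)\rho(t)\,dt\to 0$ directly from the already-established expansion formula (Theorem \ref{thm2}). The passage from $\tilde\varphi$ back to $\tilde\varphi_{0}$ produces one extra term $I_{N4}$, and \emph{only this} term is treated by a contour integral; its smallness comes not from a resolvent cancellation but from the Riemann--Lebesgue type estimate $\max_{t}e^{-|Im\lambda|\mu(t)}\left|\int_{0}^{\mu(t)}A_{ij}(t,\xi)e^{\pm i\lambda\xi}d\xi\right|\to 0$ (Marchenko, Lemma 1.3.1). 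That is the mechanism your sketch is missing.
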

\begin{proof}
According to (\ref{3}) we have, 
\begin{equation}\label{31}
\varphi_{0}(x,\lambda)=\varphi(x,\lambda)-\int_{0}^{\mu(x)}A(x,t)\left(\begin{array}{c}
\sin\lambda t \\
-\cos\lambda t
\end{array}\right)dt.
\end{equation}
It follows from (\ref{3}) and (\ref{31}) that
\[
\sum_{n=-N}^{N}\frac{1}{\alpha_{n}}\varphi(x,\lambda_{n})\tilde{\varphi_{0}}(t,\lambda_{n})=
\sum_{n=-N}^{N}\frac{1}{\alpha_{n}}\varphi_{0}(x,\lambda_{n})\tilde{\varphi_{0}}(t,\lambda_{n})+
\]
\[
+\int_{0}^{\mu(x)}A(x,\xi)\left(\sum_{n=-N}^{N}\frac{1}{\alpha_{n}}\left(\begin{array}{c}
\sin\lambda_{n}\xi \\
-\cos\lambda_{n}\xi
\end{array}\right)\tilde{\varphi_{0}}(t,\lambda_{n})\right)d\xi
\]
and 
\[
\sum_{n=-N}^{N}\frac{1}{\alpha_{n}}\varphi(x,\lambda_{n})\tilde{\varphi_{0}}(t,\lambda_{n})=
\sum_{n=-N}^{N}\frac{1}{\alpha_{n}}\varphi(x,\lambda_{n})\tilde{\varphi}(t,\lambda_{n})-
\]
\[
-\sum_{n=-N}^{N}\frac{1}{\alpha_{n}}\varphi(x,\lambda_{n})
\int_{0}^{\mu(t)}\left(\sin\lambda_{n}\xi, \ -\cos\lambda_{n}\xi\right)\tilde{A}(t,\xi)d\xi.
\]
Using the last two equalities, we obtain 
\[
\sum_{n=-N}^{N}\left[\frac{1}{\alpha_{n}}\varphi(x,\lambda_{n})\tilde{\varphi}(t,\lambda_{n})-
\frac{1}{\mu(\pi)}\varphi(x,\lambda_{n}^{0})\tilde{\varphi}(t,\lambda_{n}^{0})\right]=
\]
\[
=\sum_{n=-N}^{N}\left[\frac{1}{\alpha_{n}}
\varphi_{0}(x,\lambda_{n})\tilde{\varphi_{0}}(t,\lambda_{n})-\frac{1}{\mu(\pi)}
\varphi_{0}(x,\lambda_{n}^{0})\tilde{\varphi_{0}}(t,\lambda_{n}^{0})\right]+
\]
\[
+\int_{0}^{\mu(x)}A(x,\xi)\sum_{n=-N}^{N}\left[\frac{1}{\mu(\pi)}\left(\begin{array}{c}
\sin\lambda_{n}^{0}\xi \\
-\cos\lambda_{n}^{0}\xi
\end{array}\right)\tilde{\varphi_{0}}(t,\lambda_{n}^{0})\right]d\xi+
\]
\begin{eqnarray}\nonumber
\lefteqn{+\int_{0}^{\mu(x)}A(x,\xi)\sum_{n=-N}^{N}\left[\frac{1}{\alpha_{n}}\left(\begin{array}{c}
\sin\lambda_{n}\xi \\
-\cos\lambda_{n}\xi
\end{array}\right)\tilde{\varphi_{0}}(t,\lambda_{n})- \right.} \\ \nonumber
&&\left.-\frac{1}{\mu(\pi)}\left(\begin{array}{c}
\sin\lambda_{n}^{0}\xi \\
-\cos\lambda_{n}^{0}\xi
\end{array}\right)\tilde{\varphi_{0}}(t,\lambda_{n}^{0})\right]d\xi+ \nonumber
\end{eqnarray}
\[
+\sum_{n=-N}^{N}\frac{1}{\alpha_{n}}\varphi(x,\lambda_{n})
\int_{0}^{\mu(t)}\left(\sin\lambda_{n}\xi, \ -\cos\lambda_{n}\xi\right)\tilde{A}(t,\xi)d\xi
\]
or 
\begin{equation}\label{32}
\Phi_{N}(x,t)=I_{N1}(x,t)+I_{N2}(x,t)+I_{N3}(x,t)+I_{N4}(x,t),
\end{equation} where
\[
\Phi_{N}(x,t)=\sum_{n=-N}^{N}\left[\frac{1}{\alpha_{n}}\varphi(x,\lambda_{n})\tilde{\varphi}(t,\lambda_{n})-
\frac{1}{\mu(\pi)}\varphi(x,\lambda_{n}^{0})\tilde{\varphi}(t,\lambda_{n}^{0})\right],
\]
\[
I_{N1}(x,t)=\sum_{n=-N}^{N}\left[\frac{1}{\alpha_{n}}
\varphi_{0}(x,\lambda_{n})\tilde{\varphi_{0}}(t,\lambda_{n})-\frac{1}{\mu(\pi)}
\varphi_{0}(x,\lambda_{n}^{0})\tilde{\varphi_{0}}(t,\lambda_{n}^{0})\right],
\]
\[
I_{N2}(x,t)=\int_{0}^{\mu(x)}A(x,\xi)\sum_{n=-N}^{N}\left[\frac{1}{\mu(\pi)}\left(\begin{array}{c}
\sin\lambda_{n}^{0}\xi \\
-\cos\lambda_{n}^{0}\xi
\end{array}\right)\tilde{\varphi_{0}}(t,\lambda_{n}^{0})\right]d\xi,
\]
\begin{eqnarray}\nonumber
I_{N3}(x,t)&=&\int_{0}^{\mu(x)}A(x,\xi)\sum_{n=-N}^{N}\left[\frac{1}{\alpha_{n}}\left(\begin{array}{c}
\sin\lambda_{n}\xi \\
-\cos\lambda_{n}\xi
\end{array}\right)\tilde{\varphi_{0}}(t,\lambda_{n})-\right. \\ \nonumber
&&-\left.\frac{1}{\mu(\pi)}\left(\begin{array}{c}
\sin\lambda_{n}^{0}\xi \\
-\cos\lambda_{n}^{0}\xi
\end{array}\right)\tilde{\varphi_{0}}(t,\lambda_{n}^{0})\right]d\xi, \nonumber
\end{eqnarray}
\[
I_{N4}(x,t)=\sum_{n=-N}^{N}\frac{1}{\alpha_{n}}\varphi(x,\lambda_{n})
\int_{0}^{\mu(t)}\left(\sin\lambda_{n}\xi, \ -\cos\lambda_{n}\xi\right)\tilde{A}(t,\xi)d\xi.
\] It is easily found by using (\ref{29}) and (\ref{30}) that
\begin{equation}\label{33}
F(x,t)=\sum_{n=-\infty}^{\infty}\left[\frac{1}{\alpha_{n}}\varphi_{0}(x,\lambda_{n})\tilde{\varphi}_{0}(t,\lambda_{n})
-\frac{1}{\mu(\pi)}\varphi_{0}(x,\lambda_{n}^{0})\tilde{\varphi}_{0}(t,\lambda_{n}^{0})\right].
\end{equation}
Let $f(x)\in AC[0,\pi]$. Then according to expansion formula (\ref{16}) in Theorem \ref{thm2}, we obtain uniformly 
on $x\in[0,\pi]$
\begin{equation}\label{34}
\lim_{N\rightarrow\infty}\int_{0}^{\pi}\Phi_{N}(x,t)f(t)\rho(t)dt=
\sum_{n=-\infty}^{\infty}a_{n}\varphi(x,\lambda_{n})-\sum_{n=-\infty}^{\infty}a_{n}^{0}\varphi(x,\lambda_{n}^{0})=0.
\end{equation}
From (\ref{33}), we find
\[
\lim_{N\rightarrow\infty}\int_{0}^{\pi}I_{N1}(x,t)f(t)\rho(t)dt=
\]
\[
=\lim_{N\rightarrow\infty}
\int_{0}^{\pi}\sum_{n=-N}^{N}\left[\frac{1}{\alpha_{n}}
\varphi_{0}(x,\lambda_{n})\tilde{\varphi_{0}}(t,\lambda_{n})-
\frac{1}{\mu(\pi)}
\varphi_{0}(x,\lambda_{n}^{0})\tilde{\varphi_{0}}(t,\lambda_{n}^{0})\right]f(t)\rho(t)dt
\]
\begin{equation} \label{35}
=\int_{0}^{\pi}F(x,t)f(t)\rho(t)dt.  
\end{equation}
It follows from (\ref{3}) that 
\begin{equation} \label{36}
\left(\begin{array}{c}
\sin\lambda\xi \\
-\cos\lambda\xi
\end{array}\right)=\left\{ 
\begin{array}{c}
\varphi_{0}(\xi,\lambda),\ \ \ \ \ \ \ \ \ \ \ \ \ \ \ \ \ \xi< a, \\ 
\varphi_{0}\left(\frac{\xi}{\alpha}+a-\frac{a}{\alpha},\lambda\right) ,\ \ \ \xi> a.
\end{array}
\right.
\end{equation}
Taking into account (\ref{36}) and expansion formula (\ref{16}) in Theorem \ref{thm2} , we get
\[
\lim_{N\rightarrow\infty}\int_{0}^{\pi}I_{N2}(x,t)f(t)\rho(t)dt=  \ \ \ \ \ \ \ \ \ \ \ \ \ \ \ \ \ \ \ \ \ \ \ \ \ \ \ \ \ \
 \ \ \ \ \ \ \ \ \ \ \ \ \ \ \ \ \ \ \ \ \ \ \ \
\]
\[
=\int_{0}^{\pi}\left[\int_{0}^{\mu(x)}A(x,\xi)\sum_{n=-N}^{N}\left[\frac{1}{\mu(\pi)}\left(\begin{array}{c}
\sin\lambda_{n}^{0}\xi \\
-\cos\lambda_{n}^{0}\xi
\end{array}\right)\tilde{\varphi_{0}}(t,\lambda_{n}^{0})\right]d\xi\right]f(t)\rho(t)dt
\]
\[
=\int_{0}^{\pi}\left[\int_{0}^{a}A(x,\xi)\sum_{n=-\infty}^{\infty}\frac{1}{\mu(\pi)}\varphi_{0}(\xi,\lambda_{n}^{0})
\tilde{\varphi_{0}}(x,\lambda_{n}^{0})d\xi\right]f(t)\rho(t)dt+  \ \ \ \ \ \ \ \ \ \ \ \
\]
\[
+\int_{0}^{\pi}\left[\int_{a}^{\alpha x-\alpha a+a}A(x,\xi)
\sum_{n=-\infty}^{\infty}\frac{1}{\mu(\pi)}
\varphi_{0}\left(\frac{\xi}{\alpha}+a-\frac{a}{\alpha},
\lambda_{n}^{0}\right)
\tilde{\varphi_{0}}(x,\lambda_{n}^{0})d\xi\right]f(t)\rho(t)dt
\]
\[
=\int_{0}^{a}A(x,\xi)f(\xi)d\xi+\int_{a}^{\alpha x-\alpha a+a}A(x,\xi)f\left(\frac{\xi}{\alpha}+a-\frac{a}{\alpha}\right)d\xi.
\]
Substituting $\frac{\xi}{\alpha}+a-\frac{a}{\alpha}\rightarrow\xi'$, we obtain
\begin{eqnarray}\nonumber
\lefteqn{\lim_{N\rightarrow\infty}\int_{0}^{\pi}I_{N2}(x,t)f(t)\rho(t)dt=}
\\ \nonumber
&=&\int_{0}^{a}A(x,\xi)f(\xi)d\xi+\alpha\int_{a}^{x}A(x,\alpha\xi'-\alpha a+a)f(\xi')d\xi'
\\ \nonumber
&=&\int_{0}^{a}A(x,t)f(t)dt+\alpha\int_{a}^{x}A(x,\alpha t-\alpha a+a)f(t)dt \nonumber
\end{eqnarray}
\begin{equation}\label{37}
=\int_{0}^{x}A(x,\mu(t))f(t)\rho(t)dt.  \ \ \ \ \ \ \ \ \ \ \ \ \ \ \ \ \ \ \ \ \ \ \ \ \ \ \ \ \ \ \ \ \ \ \
\end{equation}
Now, we calculate
\begin{eqnarray}\nonumber
\lefteqn{\lim_{N\rightarrow\infty}\int_{0}^{\pi}I_{N3}(x,t)f(t)\rho(t)dt=}
\\ \nonumber
&=&\lim_{N\rightarrow\infty}\int_{0}^{\pi}\int_{0}^{\mu(x)}A(x,\xi)
\sum_{n=-N}^{N}\left[\frac{1}{\alpha_{n}}\left(\begin{array}{c}
\sin\lambda_{n}\xi \\
-\cos\lambda_{n}\xi
\end{array}\right)\tilde{\varphi_{0}}(t,\lambda_{n})-\right. \nonumber \\
&&\left.-\frac{1}{\mu(\pi)}\left(\begin{array}{c}
\sin\lambda_{n}^{0}\xi \\
-\cos\lambda_{n}^{0}\xi
\end{array}\right)\tilde{\varphi_{0}}(t,\lambda_{n}^{0})\right]f(t)\rho(t)d\xi dt \nonumber
\end{eqnarray}
\begin{equation}\label{38}
=\int_{0}^{\pi}\left[\int_{0}^{\mu(x)}A(x,\xi)F_{0}(\xi,t)d\xi\right]f(t)\rho(t)dt. \ \ \ \ \ \ \ \ \ \ \ \ \ \ \ \ 
\ \ \ \ \ \
\end{equation}
Using (\ref{6}), (\ref{7}) and residue theorem, we get
\[
\lim_{N\rightarrow\infty}\int_{0}^{\pi}I_{N4}(x,t)f(t)\rho(t)dt= \ \ \ \ \ \ \ \ \ \ \ \ \ \ \ \ \ \ \ \ \ \ \ \ 
\ \ \ \ \ \ \ \ \ \ \ \ \ \ \ \ 
\]
\[
=\lim_{N\rightarrow\infty}\int_{0}^{\pi}\left[\sum_{n=-N}^{N}\frac{1}{\alpha_{n}}\varphi(x,\lambda_{n})
\int_{0}^{\mu(t)}\left(\sin\lambda_{n}\xi, -\cos\lambda_{n}\xi\right)\tilde{A}(t,\xi)d\xi\right]f(t)\rho(t)dt
\]
\[
=\lim_{N\rightarrow\infty}\int_{0}^{\pi}
\left[\sum_{n=-N}^{N}\frac{\psi(x,\lambda_{n})}{\dot{\Delta}(\lambda_{n})}
\int_{0}^{\mu(t)}\left(\sin\lambda_{n}\xi, -\cos\lambda_{n}\xi\right)\tilde{A}(t,\xi)d\xi\right]f(t)\rho(t)dt \ \ \ 
\]
\[
=\lim_{N\rightarrow\infty}\int_{0}^{\pi}\left[\sum_{n=-N}^{N}\underset{\lambda =\lambda _{n}}{Res}
\frac{\psi(x,\lambda)}{\Delta(\lambda)}\int_{0}^{\mu(t)}
\left(\sin\lambda\xi, -\cos\lambda\xi\right)\tilde{A}(t,\xi)d\xi\right]f(t)\rho(t)dt \ \ 
\]
\[
=\lim_{N\rightarrow\infty}\int_{0}^{\pi}\left[\frac{1}{2\pi i}\int_{\Gamma_{N}}
\frac{\psi(x,\lambda)}{\Delta(\lambda)}\int_{0}^{\mu(t)}
\left(\sin\lambda\xi, \ -\cos\lambda\xi\right)\tilde{A}(t,\xi)d\xi d\lambda\right]f(t)\rho(t)dt
\]
\[
=\lim_{N\rightarrow\infty}\int_{0}^{\pi}\left[\frac{1}{2\pi i}\int_{\Gamma_{N}}
\frac{\psi(x,\lambda)}{\Delta(\lambda)}e^{\left|Im\lambda\right|\mu(t)}\times\right.
\ \ \ \ \ \ \ \ \ \ \ \ \ \ \ \ \ \ \ \ \ \ \ \ \ \ \ \ \ \ \ \ \ \ \ \ \ \ \ \ \ \ \ \ \ \ \
\]
\begin{equation}\label{39}
\left.\times e^{-\left|Im\lambda\right|\mu(t)}\int_{0}^{\mu(t)}
\left(\sin\lambda\xi, \ -\cos\lambda\xi\right)\tilde{A}(t,\xi)d\xi d\lambda\right]f(t)\rho(t)dt,
\end{equation}
where $\Gamma_{N}=\left\{\lambda:\left|\lambda\right|=\lambda_{N}^{0}+
\frac{\pi}{2\mu(\pi)}\right\}$ is a oriented counter-clockwise, $N$ is sufficiently
large number. Taking into account, the asymptotic formulas as $\left|\lambda\right|\rightarrow\infty$
\[
\psi_{1}(x,\lambda)=h_{2}\cos\lambda(\mu(\pi)-\mu(x))
-(\lambda+h_{1})\sin\lambda(\mu(\pi)-\mu(x))+O\left(e^{\left|Im\lambda\right|(\mu(\pi)-\mu(x))}\right), 
\]
\[
\psi_{2}(x,\lambda)=-h_{2}\sin\lambda(\mu(\pi)-\mu(x))
-(\lambda+h_{1})\cos\lambda(\mu(\pi)-\mu(x))+O\left(e^{\left|Im\lambda\right|(\mu(\pi)-\mu(x))}\right)
\]
and the relations (\cite{Mar}, Lemma 1.3.1)
\[
\lim_{\left|\lambda\right|\rightarrow\infty}\max_{0\leq t\leq\pi}e^{-\left|Im\lambda\right|\mu(t)}
\left|\int_{0}^{\mu(t)}A_{i,1}(t,\xi)\sin\lambda\xi d\xi\right|=0,
\]
\[
\lim_{\left|\lambda\right|\rightarrow\infty}\max_{0\leq t\leq\pi}e^{-\left|Im\lambda\right|\mu(t)}
\left|\int_{0}^{\mu(t)}A_{i,2}(t,\xi)\cos\lambda\xi d\xi\right|=0, \ \ i=1,2,
\] 
it follows from (\ref{8}) and (\ref{39}) that
\begin{equation}\label{40}
\lim_{N\rightarrow\infty}\int_{0}^{\pi}I_{N4}(x,t)f(t)\rho(t)dt=0.
\end{equation}
Thus, using (\ref{32}), (\ref{34}), (\ref{35}), (\ref{37}) (\ref{38}) and (\ref{40}), we find
\[
\int_{0}^{x}A(x,\mu(t))f(t)\rho(t)dt+\int_{0}^{\pi}F(x,t)f(t)\rho(t)dt+
\]
\[
+\int_{0}^{\pi}\left[\int_{0}^{\mu(x)}A(x,\xi)F_{0}(\xi,t)d\xi \right]f(t)\rho(t)dt=0.
\] Since $f(x)$ can be chosen arbitrarily,
\[
A(x,\mu(t))+F(x,t)+\int_{0}^{\mu(x)}A(x,\xi)F_{0}(\xi,t)d\xi=0, \ \ \ 0<t<x
\] is obtained.
\end{proof}

\section{Theorem for the Solution of the Inverse Problem}
\begin{lemma} \label{thm4}
For each fixed $x\in (0,\pi]$ the equation (\ref{28}) has a unique solution $A(x,.)\in L_{2}(0,\mu(x)).$
\end{lemma}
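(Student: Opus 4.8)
The plan is to regard (\ref{28}), for fixed $x\in(0,\pi]$, as a Fredholm equation of the second kind. I would first change the variable by $s=\mu(t)$; since $\mu'=\rho$, this carries the weighted element $\rho(t)\,dt$ into $ds$ and turns each $\tilde\varphi_0(t,\lambda_n)$ into the trigonometric row $(\sin\lambda_n s,\,-\cos\lambda_n s)$, so that (\ref{28}) becomes a matrix integral equation $(I+\mathcal F_x)A(x,\cdot)=-F(x,\cdot)$ on $L_2(0,\mu(x))$ with $\mathcal F_x$ the integral operator determined by $F_0$. Transposing and treating the columns separately reduces the homogeneous problem to the vector equation $g(t)+\int_0^{\mu(x)}\tilde F_0(\xi,t)\,g(\xi)\,d\xi=0$ for $g\in L_2(0,\mu(x);\mathbb C^2)$. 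The asymptotics $\lambda_n=\lambda_n^0+\epsilon_n$ and $\alpha_n=\mu(\pi)+\tau_n$ with $\{\epsilon_n\},\{\tau_n\}\in l_2$ (Theorem \ref{thm1}) guarantee that the series (\ref{29}) converges in $L_2$ of the square, so $F_0$ is Hilbert--Schmidt and $\mathcal F_x$ is compact; $I+\mathcal F_x$ is then Fredholm of index zero, and by the Fredholm alternative it is enough to show this homogeneous equation has only the solution $g=0$.

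Next I would take the scalar product of the homogeneous equation with $g$ and integrate over $(0,\mu(x))$, which gives $\|g\|^2+\langle\mathcal F_x g,g\rangle=0$. Substituting the series (\ref{29}) and interchanging sum and integral (legitimate by the $L_2$ convergence above), and abbreviating $c_n=\int_0^{\mu(x)}(\sin\lambda_n\xi,\,-\cos\lambda_n\xi)\,g(\xi)\,d\xi$ together with the analogous $c_n^{0}$ formed from $\lambda_n^0$, the quadratic form separates into
\[
\|g\|^2+\langle\mathcal F_x g,g\rangle
=\|g\|^2+\sum_{n=-\infty}^{\infty}\frac{1}{\alpha_n}\,|c_n|^2
-\sum_{n=-\infty}^{\infty}\frac{1}{\mu(\pi)}\,|c_n^{0}|^2 .
\]

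The decisive point is that, after extending $g$ by zero to $(0,\mu(\pi))$, the numbers $c_n^{0}$ are exactly the coefficients of $g$ relative to the eigenfunction system $\varphi_0(\cdot,\lambda_n^0)$ of the unperturbed problem $\Omega\equiv0$, all of whose normalizing numbers equal $\mu(\pi)$; hence the Parseval equality (\ref{17}) for that problem yields $\sum_n\frac{1}{\mu(\pi)}|c_n^{0}|^2=\|g\|^2$. The right-hand side therefore collapses to $\sum_n\frac{1}{\alpha_n}|c_n|^2=0$. Since $\alpha_n=\|\varphi(\cdot,\lambda_n)\|^2>0$ for every $n$, this forces $c_n=0$ for all $n$, i.e. the zero-extension of $g$ is orthogonal in $L_2(0,\mu(\pi);\mathbb C^2)$ to every vector function with components $\sin\lambda_n\xi$ and $-\cos\lambda_n\xi$. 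Because $\lambda_n=\frac{n\pi}{\mu(\pi)}+\epsilon_n$ with $\{\epsilon_n\}\in l_2$, this system is a small perturbation of the orthogonal basis generated by $\{\lambda_n^0\}$ and, by a Kadec/Levin-type theorem, forms a Riesz basis and in particular is complete; consequently $g=0$.

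I expect the main obstacle to be the quadratic-form identity of the second and third paragraphs: one must justify the termwise integration of the conditionally convergent series (\ref{29}) and verify that its ``free'' part reproduces $\|g\|^2$ exactly. This hinges on the compatibility of the substitution $s=\mu(t)$ with the weight $\rho$ and on the Parseval equality for the unperturbed operator. The completeness of the perturbed trigonometric system is the remaining technical ingredient, but it is routine once the $l_2$-asymptotics of the eigenvalues are available.
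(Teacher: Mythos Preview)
Your proposal is correct and follows essentially the same line as the paper: reduce (\ref{28}) to a Fredholm equation, invoke the alternative, pair the homogeneous equation with $g$, use Parseval for the unperturbed problem $\{\lambda_n^0,\mu(\pi)\}$ to cancel $\|g\|^2$ against the $\lambda_n^0$-sum, deduce $c_n=0$ from $\alpha_n>0$, and finish by completeness of the perturbed trigonometric system.

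The only real difference is packaging. You carry out the change of variable $s=\mu(t)$ at the outset, so that the equation lives on $L_2(0,\mu(x))$ with the symmetric kernel built from $(\sin\lambda_n s,\,-\cos\lambda_n s)$; the paper instead writes (\ref{28}) as $L_xA+K_xA=-F$ with $(L_xf)(t)=f(\mu(t))$, checks explicitly that $L_x$ is boundedly invertible on $L_2$, and then argues on the homogeneous equation $g(\mu(t))+\int_0^{\mu(x)}g(\xi)F_0(\xi,t)\,d\xi=0$, performing the substitution $\xi\to\mu(\xi)$ inside the quadratic form rather than beforehand. Your treatment is a bit cleaner and you are more explicit about the two technical inputs the paper only asserts: that the $l_2$-asymptotics of Theorem~\ref{thm1} make $F_0$ Hilbert--Schmidt (hence $\mathcal F_x$ compact), and that completeness of $\{(\sin\lambda_n\xi,-\cos\lambda_n\xi)\}$ in $L_2(0,\mu(\pi);\mathbb C^2)$ follows from a Kadec/Levin-type perturbation argument. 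The paper simply states ``the system $\{\varphi_0(t,\lambda_n)\}$ is complete in $L_{2,\rho}(0,\pi;\mathbb C^2)$'' without justification, so your remark is a genuine clarification rather than a deviation.
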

\begin{proof}
When $a<x$, the equation (\ref{28}) can be rewritten as
\[
L_{x}A(x,.)+K_{x}A(x,.)=-F(x,.),
\] where
\begin{equation}\label{41}
\left(L_{x}f\right)(t)=\left\{
\begin{array}{c}
f(t), \ \ \ \ \ \ \ \ \ \ \ \ \ \ \ \ t\leq a<x, \\
f(\alpha t-\alpha a+a), \ \ \ a<t\leq x,
\end{array}
\right.
\end{equation}
\[
\left(K_{x}f\right)=\int_{0}^{\alpha x-\alpha a+a}f(\xi)F_{0}(\xi,t)d\xi, \ \ 0<t<x.
\]
Now, we shall prove that $L_{x}$ is invertible, i.e has a bounded inverse in $L_{2}(0,\pi)$.

Consider the equation $\left(L_{x}f\right)(t)=\phi(t),$ $\phi(t)\in L_{2}(0,\pi;\mathbb{C}^{2}).$ 
From here and (\ref{41}),
\[
f(t)=\left(L_{x}^{-1}\phi\right)(t)=\left\{\begin{array}{c}
\phi(t), \ \ \ \ \ \ \ \ \ \ \ \ \ t\leq a, \\
\phi\left(\frac{t+\alpha a-a}{\alpha}\right), \ \ \ a<t.
\end{array}
\right.
\] 
We show that 
\[
\left\|f\right\|_{L_{2}}=\left\|L_{x}^{-1}\phi\right\|_{L_{2}}\leq C\left\|\phi\right\|_{L_{2}}. 
\] In fact,
\[
\int_{0}^{\pi}\left(\left|f_{1}(t)\right|^{2}+\left|f_{2}(t)\right|^{2}\right)dt=\int_{0}^{a}
\left(\left|\phi_{1}(t)\right|^{2}+\left|\phi_{2}(t)\right|^{2}\right)dt+
\]
\[
+\int_{a}^{\pi}\left(\left|\phi_{1}\left(\frac{t+\alpha a-a}{\alpha}\right)\right|^{2}+
\left|\phi_{2}\left(\frac{t+\alpha a-a}{\alpha}\right)\right|^{2}\right)dt
\]
\[
=\int_{0}^{a}
\left(\left|\phi_{1}(t)\right|^{2}+\left|\phi_{2}(t)\right|^{2}\right)dt+
\alpha\int_{a}^{\left(\frac{\pi+\alpha a-a}{\alpha}\right)}
\left(\left|\phi_{1}(t)\right|^{2}+\left|\phi_{2}(t)\right|^{2}\right)dt
\]
\[
\leq C\int_{0}^{\pi}\left(\left|\phi_{1}(t)\right|^{2}+\left|\phi_{2}(t)\right|^{2}\right)dt.
\]
Thus, the operator $L_{x}$ is invertible in $L_{2}(0,\pi).$ Therefore, the main equation (\ref{28}) is equivalent to
\[
A(x,.)+L_{x}^{-1}K_{x}A(x,.)=-L_{x}^{-1}F(x,.)
\]
and $L_{x}^{-1}K_{x}$ is completely continuous in $L_{2}(0,\pi)$. Then it is sufficient to prove that the equation
\begin{equation}\label{42}
g(\mu(t))+\int_{0}^{\mu(x)}g(\xi)F_{0}(\xi,t)d\xi=0
\end{equation}
has only trivial solution $g(t)=0.$ Let $g(t)$ be a non-trivial solution of (\ref{42}). Then
\[
\int_{0}^{x}\left(g_{1}^{2}(\mu(t))+g_{2}^{2}(\mu(t))\right)\rho(t)dt
+\int_{0}^{x}\int_{0}^{\mu(x)}\left(g(\xi)F_{0}(\xi,t),\ g(\mu(t))\right)\rho(t)d\xi dt=0.
\] It follows from (\ref{29}) that 
\[
\int_{0}^{x}\left(g_{1}^{2}(\mu(t))+g_{2}^{2}(\mu(t))\right)\rho(t)dt+
\]
\[
+\int_{0}^{x}\tilde{g}(\mu(t))\rho(t)\int_{0}^{\mu(x)}g(\xi)\left(\sum_{n=-\infty}^{\infty}
\frac{1}{\alpha_{n}}\left(\begin{array}{c}
\sin\lambda_{n}\xi \\
-\cos\lambda_{n}\xi
\end{array}
\right)\tilde{\varphi}_{0}(t,\lambda_{n})-\right.
\]
\[
\left.-\frac{1}{\mu(\pi)}\left(\begin{array}{c}
\sin\lambda_{n}^{0}\xi \\
-\cos\lambda_{n}^{0}\xi
\end{array}
\right)\tilde{\varphi}_{0}(t,\lambda_{n}^{0})\right)d\xi dt=0.
\] Using (\ref{36}), we get
\[
\int_{0}^{x}\left(g_{1}^{2}(\mu(t))+g_{2}^{2}(\mu(t))\right)\rho(t)dt+
\]
\[
+\int_{0}^{x}\tilde{g}(\mu(t))\rho(t)\int_{0}^{a}g(\xi)\sum_{n=-\infty}^{\infty}
\frac{1}{\alpha_{n}}\varphi_{0}(\xi,\lambda_{n})\tilde{\varphi}_{0}(t,\lambda_{n})d\xi dt-
\]
\[
-\int_{0}^{x}\tilde{g}(\mu(t))\rho(t)\int_{0}^{a}g(\xi)\sum_{n=-\infty}^{\infty}
\frac{1}{\mu(\pi)}\varphi_{0}(\xi,\lambda_{n}^{0})\tilde{\varphi}_{0}(t,\lambda_{n}^{0})d\xi dt+
\]
\[
+\int_{0}^{x}\tilde{g}(\mu(t))\rho(t)\int_{0}^{\alpha x-\alpha a+a}g(\xi)\sum_{n=-\infty}^{\infty}
\frac{1}{\alpha_{n}}\varphi_{0}\left(\frac{\xi}{\alpha}+a-\frac{a}{\alpha},
\lambda_{n}\right)\tilde{\varphi}_{0}(t,\lambda_{n})d\xi dt
\]
\[
-\int_{0}^{x}\tilde{g}(\mu(t))\rho(t)\int_{0}^{\alpha x-\alpha a+a}g(\xi)
\sum_{n=-\infty}^{\infty}
\frac{1}{\mu(\pi)}\varphi_{0}\left(\frac{\xi}{\alpha}+a-\frac{a}{\alpha},
\lambda_{n}^{0}\right)\tilde{\varphi}_{0}(t,\lambda_{n}^{0})d\xi dt=0.
\] Substituting $\frac{\xi}{\alpha}+a-\frac{a}{\alpha}\rightarrow\xi$ into the last two integrals, we obtain
\[
\int_{0}^{x}\left(g_{1}^{2}(\mu(t))+g_{2}^{2}(\mu(t))\right)\rho(t)dt+
\]
\[
+\int_{0}^{x}\tilde{g}(\mu(t))\rho(t)\int_{0}^{a}g(\xi)\sum_{n=-\infty}^{\infty}
\frac{1}{\alpha_{n}}\varphi_{0}(\xi,\lambda_{n})\tilde{\varphi}_{0}(t,\lambda_{n})d\xi dt-
\]
\[
-\int_{0}^{x}\tilde{g}(\mu(t))\rho(t)\int_{0}^{a}g(\xi)\sum_{n=-\infty}^{\infty}
\frac{1}{\mu(\pi)}\varphi_{0}(\xi,\lambda_{n}^{0})\tilde{\varphi}_{0}(t,\lambda_{n}^{0})d\xi dt+
\]
\[
+\alpha\int_{0}^{x}\tilde{g}(\mu(t))\rho(t)\int_{a}^{x}g(\alpha\xi-\alpha a+a)\sum_{n=-\infty}^{\infty}
\frac{1}{\alpha_{n}}\varphi_{0}(\xi,\lambda_{n})\tilde{\varphi}_{0}(t,\lambda_{n})d\xi dt-
\]
\[
-\alpha\int_{0}^{x}\tilde{g}(\mu(t))\rho(t)\int_{a}^{x}g(\alpha\xi-\alpha a+a)\sum_{n=-\infty}^{\infty}
\frac{1}{\mu(\pi)}\varphi_{0}(\xi,\lambda_{n}^{0})\tilde{\varphi}_{0}(t,\lambda_{n}^{0})d\xi dt=
\]
\[
=\int_{0}^{x}\left(g_{1}^{2}(\mu(t))+g_{2}^{2}(\mu(t))\right)\rho(t)dt+
\]
\[
+\int_{0}^{x}\tilde{g}(\mu(t))\rho(t)\int_{0}^{x}g(\mu(\xi))\rho(\xi)\sum_{n=-\infty}^{\infty}
\frac{1}{\alpha_{n}}\varphi_{0}(\xi,\lambda_{n})\tilde{\varphi}_{0}(t,\lambda_{n})d\xi dt-
\]
\begin{equation}\label{43}
-\int_{0}^{x}\tilde{g}(\mu(t))\rho(t)\int_{0}^{x}g(\mu(\xi))\rho(\xi)\sum_{n=-\infty}^{\infty}
\frac{1}{\mu(\pi)}\varphi_{0}(\xi,\lambda_{n}^{0})\tilde{\varphi}_{0}(t,\lambda_{n}^{0})d\xi dt=0.
\end{equation} 
Using Parseval equality, 
\[
g(\mu(t))=\sum_{n=-\infty}^{\infty}\left(\frac{1}{\mu(\pi)}
\int_{0}^{x}g(\mu(t))\tilde{\varphi}_{0}(t,\lambda_{n}^{0})\rho(t)dt\right)
\varphi_{0}(t,\lambda_{n}^{0}), 
\] it follows from (\ref{43}) that
\[
\int_{0}^{x}\tilde{g}(\mu(t))\rho(t)\int_{0}^{x}g(\mu(\xi))\rho(\xi)\sum_{n=-\infty}^{\infty}
\frac{1}{\alpha_{n}}\varphi_{0}(\xi,\lambda_{n})\tilde{\varphi}_{0}(t,\lambda_{n})d\xi dt=0.
\] Since the system $\left\{\varphi_{0}(t,\lambda_{n})\right\},$ $(n\in\mathbb{Z})$ 
is complete in $L_{2,\rho}(0,\pi;\mathbb{C}^{2})$, 
we have $g(\mu(t))\equiv 0$, i.e. $\left(L_{x}g\right)(t)=0$. For $L_{x}$ is invertible in 
$L_{2}(0,\pi)$, $A(x,.)=0$ is obtained. 
\end{proof}
\begin{theorem}
Let $L(\Omega(x),h_{1}, h_{2})$ and $\hat{L}(\hat{\Omega}(x),\hat{h}_{1},\hat{h}_{2})$ be two boundary value problems and 
\[
\lambda_{n}=\hat{\lambda}_{n}, \ \ \ \ \ \alpha_{n}=\hat{\alpha}_{n}, \ \ \left(n\in\mathbb{Z}\right).
\] Then 
\[
\Omega(x)=\hat{\Omega}(x) \ a.e. \ on \left(0,\pi\right), \ \ h_{1}=\hat{h}_{1}, \ \ h_{2}=\hat{h}_{2}.
\]
\end{theorem}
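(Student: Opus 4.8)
The plan is to reduce the statement to the unique solvability of the Gelfand--Levitan--Marchenko equation established in Lemma \ref{thm4}, and then to read off $\Omega$ from the reconstruction formula (\ref{4}) and the boundary constants from the characteristic function. The first step is to observe that the kernels $F_{0}$ and $F$ in (\ref{29}) and (\ref{30}) are assembled entirely from the spectral data $\{\lambda_{n},\alpha_{n}\}$ together with the auxiliary quantities $\lambda_{n}^{0}=\frac{n\pi}{\mu(\pi)}$ and $\mu(\pi)$. Since the weight $\rho$ (hence $a$, $\alpha$, and the function $\mu$) is common to the two problems $L$ and $\hat{L}$, these auxiliary quantities agree, so the hypothesis $\lambda_{n}=\hat{\lambda}_{n}$, $\alpha_{n}=\hat{\alpha}_{n}$ for all $n\in\mathbb{Z}$ forces $F_{0}(x,t)=\hat{F}_{0}(x,t)$ and $F(x,t)=\hat{F}(x,t)$ identically.

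Next I would fix $x\in(0,\pi]$ and note that both $A(x,\cdot)$ and $\hat{A}(x,\cdot)$ satisfy the main equation (\ref{28}) with one and the same pair of kernels $F$, $F_{0}$. By Lemma \ref{thm4} that equation has a unique solution in $L_{2}(0,\mu(x))$, so $A(x,t)=\hat{A}(x,t)$ for almost every $t\in(0,\mu(x))$ and for every $x$. In particular $A(x,\mu(x))=\hat{A}(x,\mu(x))$, and substituting this into the reconstruction formula (\ref{4}), namely $\Omega(x)=\rho(x)A(x,\mu(x))B-BA(x,\mu(x))$, gives $\Omega(x)=\hat{\Omega}(x)$ almost everywhere on $(0,\pi)$.

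Finally I would recover the boundary coefficients. Because $\Omega=\hat{\Omega}$ and the initial condition (\ref{b}) for $\varphi$ does not involve $h_{1}$ or $h_{2}$, the solution $\varphi(x,\lambda)$ of (\ref{1}) coincides with $\hat{\varphi}(x,\lambda)$; in particular $\varphi_{j}(\pi,\lambda)=\hat{\varphi}_{j}(\pi,\lambda)$ for $j=1,2$. By Proposition \ref{p1} the equality of eigenvalues yields $\Delta(\lambda)=\hat{\Delta}(\lambda)$. Writing $\Delta(\lambda)=(\lambda+h_{1})\varphi_{1}(\pi,\lambda)+h_{2}\varphi_{2}(\pi,\lambda)$ and subtracting the corresponding identity for $\hat{\Delta}$, I obtain $(h_{1}-\hat{h}_{1})\varphi_{1}(\pi,\lambda)+(h_{2}-\hat{h}_{2})\varphi_{2}(\pi,\lambda)\equiv 0$. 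The asymptotics (\ref{11}) show $\varphi_{1}(\pi,\lambda)\sim\sin\lambda\mu(\pi)$ and $\varphi_{2}(\pi,\lambda)\sim-\cos\lambda\mu(\pi)$, which are linearly independent as entire functions of $\lambda$; hence $h_{1}=\hat{h}_{1}$ and $h_{2}=\hat{h}_{2}$.

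Most of the analytic work has already been absorbed into Lemma \ref{thm4}, so the theorem is essentially its corollary. The step I expect to demand the most care is the very first one: verifying rigorously that $F_{0}=\hat{F}_{0}$, which hinges on the fixedness of $\rho$ (so that $\mu(\pi)$ and the $\lambda_{n}^{0}$ truly match) and on the absolute convergence in (\ref{29}) that legitimizes term-by-term comparison; once $F_{0}=\hat{F}_{0}$ is secured, the remainder follows directly from uniqueness of the main equation and the linear-independence argument for the constants.
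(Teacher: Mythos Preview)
Your proposal is correct and follows essentially the same route as the paper: equality of spectral data gives $F_{0}=\hat{F}_{0}$ and $F=\hat{F}$, Lemma~\ref{thm4} forces $A=\hat{A}$, and (\ref{4}) yields $\Omega=\hat{\Omega}$. The only minor variation is in the last step: the paper recovers $h_{1},h_{2}$ by passing through $\dot{\Delta}(\lambda_{n})$ and (\ref{7}) to get $\beta_{n}=\hat{\beta}_{n}$, and then reads off the constants from $\psi(\pi,\lambda_{n})=\beta_{n}\varphi(\pi,\lambda_{n})$ together with (\ref{b}), whereas you subtract the two identities $\Delta(\lambda)=(\lambda+h_{1})\varphi_{1}(\pi,\lambda)+h_{2}\varphi_{2}(\pi,\lambda)$ and use the linear independence coming from (\ref{11}); both arguments are equivalent and rest on the same ingredients.
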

\begin{proof}
According to (\ref{29}) and (\ref{30}), $F_{0}(x,t)=\hat{F}_{0}(x,t)$ and 
$F(x,t)=\hat{F}(x,t)$. Then, from the fundamental
equation (\ref{28}), we have $A(x,t)=\hat{A}(x,t)$. It follows from (\ref{4}) that 
$\Omega(x)=\hat{\Omega}(x)$ a.e. on $\left(0,\pi\right)$. Taking into account (\ref{3}), we find
$\varphi(x,\lambda_{n})=\hat{\varphi}(x,\lambda_{n})$. In consideration of (\ref{a}), we get 
$\dot{\Delta}(\lambda_{n})=\dot{\hat{\Delta}}(\lambda_{n})$ and from (\ref{7}), $\beta_{n}=\hat{\beta}_{n}$.
Thus, using (\ref{b}) and (\ref{6}), we obtain $h_{1}=\hat{h}_{1}, \ \ h_{2}=\hat{h}_{2}$.
\end{proof}
\section{Solution of Inverse Problem}
\quad Let the real numbers $\left\{\lambda_{n},\alpha_{n}\right\}$, $(n\in\mathbb{Z})$ 
of the form (\ref{9}) and (\ref{10}) be given. Using these numbers, we construct the functions 
$F_{0}(x,t)$ and $F(x,t)$ by the formulas (\ref{29}) and (\ref{30}) and determine $A(x,t)$ 
from the fundamental equation (\ref{28}).

Now, let us construct the function $\varphi(x,\lambda)$ by the formula (\ref{3}), 
the function $\Omega(x)$ by the formula (\ref{4}), $\Delta(\lambda)$ by the formula (\ref{a}) and $\beta_{n}$
by the formula (\ref{7}) respectively, i.e.,
\[
\varphi(x,\lambda):=\varphi_{0}(x,\lambda)+\int_{0}^{\mu(x)}A(x,t)\left(\begin{array}{c}
\sin\lambda t \\
-\cos\lambda t
\end{array}\right)dt,
\]
\[
\Omega(x):=\rho(x)A(x,\mu(x))B-BA(x,\mu(x)),
\]
\[
\Delta(\lambda):=-\mu(\pi)(\lambda_{0}^{2}-\lambda^{2})\prod_{n=1}^{\infty}
\frac{\lambda_{n}^{2}-\lambda^{2}}{(\lambda_{n}^{0})^{2}},
\]
\[
\beta_{n}:=\frac{\dot{\Delta}(\lambda_{n})}{\alpha_{n}}\neq 0.
\]
The function $F_{0}(x,t)$ can be rewritten as follows:
\[
F_{0}(x,t)=\frac{1}{2}\left[a(x-\mu(t))+a(x+\mu(t))T\right],
\]
where
\[
a(x)=\sum_{n=-\infty}^{\infty}\left[\frac{1}{\alpha_{n}}\left(\begin{array}{cc}
\cos\lambda_{n}x & -\sin\lambda_{n}x \\
\sin\lambda_{n}x & \cos\lambda_{n}x
\end{array}
\right)-\frac{1}{\mu(\pi)}\left(\begin{array}{cc}
\cos\lambda_{n}^{0}x & -\sin\lambda_{n}^{0}x \\
\sin\lambda_{n}^{0}x & \cos\lambda_{n}^{0}x
\end{array}
\right)\right]
\] 
and $T=\left(\begin{array}{cc}
-1 & 0 \\
0 & 1
\end{array}\right).$ Analogously in \cite{Yur}, it is shown that the function $a(x)\in W_{2}^{1}[0,2\pi].$ 

\subsection{Derivation of the Differential Equation}
\begin{lemma} \label{lm6}
The relations hold: 
\begin{equation}\label{44}
B\varphi'(x,\lambda)+\Omega(x)\varphi(x,\lambda)=\lambda\rho(x)\varphi(x,\lambda),
\end{equation}
\begin{equation}\label{45}
\varphi_{1}(0,\lambda)=0, \ \ \ \ \varphi_{2}(0,\lambda)=-1.
\end{equation}
\end{lemma}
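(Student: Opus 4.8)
The initial conditions (\ref{45}) are immediate from the representation. At $x=0$ we have $\mu(0)=0$, so the integral in (\ref{3}) is over an empty interval and $\varphi(0,\lambda)=\varphi_0(0,\lambda)=(\sin 0,\,-\cos 0)^{T}=(0,\,-1)^{T}$; hence $\varphi_1(0,\lambda)=0$ and $\varphi_2(0,\lambda)=-1$. The substance of the lemma is therefore (\ref{44}), which I would verify by direct computation.

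First I record that the unperturbed term already solves the free equation. Writing $c(\lambda t)=(\sin\lambda t,\,-\cos\lambda t)^{T}$, one checks $B\tfrac{d}{dt}c(\lambda t)=\lambda c(\lambda t)$, and since $\mu'(x)=\rho(x)$ this gives $B\varphi_0'(x,\lambda)=\lambda\rho(x)\varphi_0(x,\lambda)$. Consequently, setting $v(x,\lambda):=\varphi(x,\lambda)-\varphi_0(x,\lambda)=\int_0^{\mu(x)}A(x,t)c(\lambda t)\,dt$, the identity (\ref{44}) reduces to showing $Bv'+\Omega\varphi-\lambda\rho v=0$. Differentiating $v$ by the Leibniz rule yields a boundary contribution $\rho(x)A(x,\mu(x))\varphi_0$ plus $\int_0^{\mu(x)}A_x(x,t)c(\lambda t)\,dt$; applying $B$ and inserting the Goursat relation $BA_x=-\Omega A-\rho(t)A_t B$ (the kernel equation accompanying (\ref{3})) converts the integral into $-\Omega v-\int_0^{\mu(x)}\rho(t)A_t(x,t)Bc(\lambda t)\,dt$.

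The last integral I would treat by integration by parts in $t$, using $\tfrac{d}{dt}\bigl(Bc(\lambda t)\bigr)=\lambda c(\lambda t)$. This produces a boundary term at $t=0$, which vanishes because $Bc(0)=(-1,0)^{T}$ and $A_{11}(x,0)=A_{21}(x,0)=0$; a term at the upper limit $t=\mu(x)$; and a surviving bulk integral $-\lambda\int_0^{\mu(x)}\rho(t)A(x,t)c(\lambda t)\,dt$. The upper-limit term combined with $\rho(x)BA(x,\mu(x))\varphi_0$ and the definition (\ref{4}), namely $\Omega(x)=\rho(x)A(x,\mu(x))B-BA(x,\mu(x))$, is designed to assemble the contribution $\Omega\varphi_0$ needed to cancel $\Omega\varphi-\Omega v=\Omega\varphi_0$, while the bulk integral should match $-\lambda\rho(x)v$. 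For $x\le a$ one has $\rho\equiv 1$ on $[0,\mu(x)]=[0,x]$ and these cancellations are exactly those of the classical smooth Dirac case, so (\ref{44}) follows there.

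The main obstacle is the jump of $\rho$ at $t=a$. When $x>a$ the interval of integration straddles the discontinuity, so the integration by parts produces an additional interface term at $t=a$ proportional to $(\alpha-1)$, and the mismatch between $\int_0^{\mu(x)}\rho(t)A c\,dt$ and $\rho(x)\int_0^{\mu(x)}A c\,dt$ leaves a further residual $-\lambda(\alpha-1)\int_0^{a}A(x,t)c(\lambda t)\,dt$. The crux of the proof is to show these $(\alpha-1)$-proportional remainders cancel, i.e. that the reconstructed kernel $A$ satisfies the correct transmission relation across the characteristic $t=a$ (equivalently, that the constructed $\varphi$ is continuous at $x=a$). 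As a prerequisite I would also need that the $A$ obtained from the main equation (\ref{28}) is genuinely differentiable and does satisfy the Goursat PDE used above; this I expect to extract from the regularity $a(x)\in W_2^1[0,2\pi]$ of $F_0$ noted before the lemma, by differentiating (\ref{28}) in $x$ and $t$ and reading off the behaviour of $A$ at $t=a$.
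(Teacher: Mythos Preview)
Your plan has the right two-step architecture—first establish a Goursat-type PDE for the kernel $A$, then use it to compute $B\varphi'+\Omega\varphi-\lambda\rho\varphi$—but you have skipped the substantive step and mis-specified the PDE you use. In this section $A$ is \emph{defined} as the solution of the main equation (\ref{28}); the kernel relations quoted under (\ref{3}) belong to the direct problem and must be \emph{proved} here from (\ref{28}). The paper does this by differentiating (\ref{28}) in $x$ and in $t$, combining the results through the transport identities (\ref{48})--(\ref{49}) for $F_0$ and $F$, and reducing to a homogeneous equation $J(x,\mu(t))+\int_0^{\mu(x)}J(x,\xi)F_0(\xi,t)\,d\xi=0$; the uniqueness Lemma~\ref{thm4} then forces $J\equiv 0$, namely
\[
BA_x(x,t)+\rho(x)\,A_t(x,t)B+\Omega(x)A(x,t)=0,\qquad 0<t<\mu(x).
\]
Note the coefficient is $\rho(x)$, not the $\rho(t)$ you wrote. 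This is precisely what dissolves your ``main obstacle'': since $\rho(x)$ is constant in the integration variable $t$, the integration by parts produces no interface contribution at $t=a$, and the bulk term is exactly $-\lambda\rho(x)\int_0^{\mu(x)}A(x,t)c(\lambda t)\,dt=-\lambda\rho(x)v$. Your $(\alpha-1)$-proportional residuals are an artifact of having inserted the wrong form of the kernel equation.

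So the gap is twofold. First, you have not derived the kernel PDE from (\ref{28}); that derivation is the heart of the lemma and relies on Lemma~\ref{thm4}, not merely on the $W_2^1$ regularity of $F_0$. Second, with the correct $\rho(x)$-form (\ref{59}) in hand, the verification of (\ref{44}) is the clean computation you already sketched for $x\le a$, valid for all $x\in(0,\pi)$ without any transmission condition to check. The boundary relation $A_{11}(x,0)=A_{21}(x,0)=0$ is likewise read off from (\ref{28}) via $F(x,0)BS=0$ (equation (\ref{50})), rather than assumed.
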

\begin{proof}
Differentiating on $x$ and $y$ the equation (\ref{28}) respectively, we get
\begin{equation}\label{46}
A_{x}^{'}(x,\mu(t))+F_{x}^{'}(x,t)+A(x,\mu(x))F_{0}(\mu(x),t)+\int_{0}^{\mu(x)}A_{x}^{'}(x,\xi)F_{0}(\xi,t)d\xi=0,
\end{equation} 
\begin{equation}\label{47}
\rho(t)A_{t}^{'}(x,\mu(t))+F_{t}^{'}(x,t)+\int_{0}^{\mu(x)}A(x,\xi)F_{0_{t}}^{'}(\xi,t)d\xi=0.
\end{equation}
It follows from (\ref{29}) and (\ref{30}) that
\begin{equation}\label{48}
\frac{\partial}{\partial t}F_{0}(x,t)B+\rho(t)B\frac{\partial}{\partial x}F_{0}(x,t)=0,
\end{equation}
\begin{equation}\label{49}
\rho(x)\frac{\partial}{\partial t}F(x,t)B+\rho(t)B\frac{\partial}{\partial x}F(x,t)=0.
\end{equation}
Since $F_{0}(x,0)BS=0$ and $F(x,0)BS=0$, where $S=\left(\begin{array}{c}
0 \\
-1
\end{array}\right)$,
using the main equation (\ref{28}), we obtain
\begin{equation}\label{50}
A(x,0)BS=0,
\end{equation}
or
\[
A_{11}(x,0)=A_{21}(x,0)=0.
\]
Multiplying the equation (\ref{46}) on the left by $B$ and $\rho(t)$ 
we get
\[
\rho(t)BF_{x}^{'}(x,t)+\rho(t)BA_{x}^{'}(x,\mu(t))+\rho(t)BA(x,\mu(x))F_{0}(\mu(x),t)+
\]
\begin{equation}\label{51}
+\rho(t)\int_{0}^{\mu(x)}BA_{x}^{'}(x,\xi)F_{0}(\xi,t)d\xi=0
\end{equation}
and multiplying the equation (\ref{47}) on the right by $B$ and $\rho(x)$ we have
\begin{equation}\label{52}
\rho(x)F_{t}^{'}(x,t)B+\rho(x)\rho(t)A_{t}^{'}(x,\mu(t))B+\rho(x)\int_{0}^{\mu(x)}A(x,\xi)F_{0_{t}}^{'}(\xi,t)Bd\xi=0.
\end{equation}
Adding (\ref{51}) and (\ref{52}) and using (\ref{49}), we find 
\[
\rho(t)BA_{x}^{'}(x,\mu(t))+\rho(t)BA(x,\mu(x))F_{0}(\mu(x),t)+
\rho(t)\int_{0}^{\mu(x)}BA_{x}^{'}(x,\xi)F_{0}(\xi,t)d\xi=
\]
\begin{equation}\label{53}
=-\rho(x)\rho(t)A_{t}^{'}(x,\mu(t))B-\rho(x)\int_{0}^{\mu(x)}A(x,\xi)F_{0_{t}}^{'}(\xi,t)B d\xi\equiv I(x,t).
\end{equation}
From (\ref{48}), we get
\begin{equation}\label{54}
I(x,t)=-\rho(x)\rho(t)A_{t}^{'}(x,\mu(t))B+\rho(x)\rho(t)\int_{0}^{\mu(x)}A(x,\xi)BF_{0_{\xi}}^{'}(\xi,t) d\xi.
\end{equation}
Integrating by parts and from (\ref{50})
\[
I(x,t)=-\rho(x)\rho(t)A_{t}^{'}(x,\mu(t))B+\rho(t)\rho(x)A(x,\mu(x))BF_{0}(\mu(x),t)-
\]
\begin{equation}\label{55}
-\rho(x)\rho(t)\int_{0}^{\mu(x)}A_{\xi}^{'}(x,\xi)BF_{0}(\xi,t)d\xi
\end{equation}
is obtained. Substituting (\ref{55}) into (\ref{53}) and divided by $\rho(t)\neq 0$, we have
\[
BA_{x}^{'}(x,\mu(t))+BA(x,\mu(x))F_{0}(\mu(x),t)
-\rho(x)A(x,\mu(x))BF_{0}(\mu(x),t)+
\]
\begin{equation}\label{56}
+\rho(x)A_{t}^{'}(x,\mu(t))B+\int_{0}^{\mu(x)}\left[BA_{x}^{'}(x,\xi)
+\rho(x)A_{\xi}^{'}(x,\xi)B\right]F_{0}(\xi,t)d\xi=0.
\end{equation}
Multiplying (\ref{28}) on the left by $\Omega(x)$ in the form of (\ref{4}) and add to (\ref{56})
\[
BA_{x}^{'}(x,\mu(t))+\rho(x)A_{t}^{'}(x,\mu(t))B+\Omega(x)A(x,\mu(t))+
\]
\begin{equation}\label{57}
+\int_{0}^{\mu(x)}\left[BA_{x}^{'}(x,\xi)+\rho(x)A_{\xi}^{'}(x,\xi)B+\Omega(x)A(x,\xi)\right]F_{0}(\xi,t)dt=0
\end{equation}
is obtained. Setting
\[
J(x,t):=BA_{x}^{'}(x,t)+\rho(x)A_{t}^{'}(x,t)B+\Omega(x)A(x,t),
\] we can rewrite equation (\ref{57}) as follows
\begin{equation}\label{58}
J(x,\mu(t))+\int_{0}^{\mu(x)}J(x,\xi)F_{0}(\xi,t)d\xi=0.
\end{equation}
According to Lemma \ref{thm4}, homogeneous equation (\ref{58}) 
has only the trivial solution, i.e.
\begin{equation}\label{59}
BA_{x}^{'}(x,t)+\rho(x)A_{t}^{'}(x,t)B+\Omega(x)A(x,t)=0, \ \ \ 0<t<x.
\end{equation}
Differentiating (\ref{3}) and multiplying on the left by $B$, we have
\[
B\varphi'(x,\lambda)=\lambda\rho(x)B\left(\begin{array}{c}
\cos\lambda\mu(x) \\
\sin\lambda\mu(x)
\end{array}\right)+BA(x,\mu(x))\left(\begin{array}{c}
\sin\lambda\mu(x) \\
-\cos\lambda\mu(x)
\end{array}\right)+
\]
\begin{equation}\label{60}
+\int_{0}^{\mu(x)}BA_{x}^{'}(x,t)\left(\begin{array}{c}
\sin\lambda t \\
-\cos\lambda t
\end{array}\right)dt.
\end{equation}
On the other hand, multiplying (\ref{3}) on the left by $\lambda\rho(x)$ and then integrating by parts and using 
(\ref{50}), we find
\[
\lambda\rho(x)\varphi(x,\lambda)=\lambda\rho(x)\left(\begin{array}{c}
\sin\lambda\mu(x) \\
-\cos\lambda\mu(x)
\end{array}\right)+\rho(x)A(x,\mu(x))B\left(\begin{array}{c}
\sin\lambda\mu(x) \\
-\cos\lambda\mu(x)
\end{array}\right)-
\]
\begin{equation}\label{61}
-\rho(x)\int_{0}^{\mu(x)}A_{t}^{'}(x,t)B\left(\begin{array}{c}
\sin\lambda t\\
-\cos\lambda t
\end{array}\right)dt.
\end{equation}
It follows from (\ref{60}) and (\ref{61}) that
\[
\lambda\rho(x)\varphi(x,\lambda)=B\varphi'(x,\lambda)-
\left[BA(x,\mu(x))-\rho(x)A(x,\mu(x))B\right]\left(\begin{array}{c}
\sin\lambda\mu(x)\\
-\cos\lambda\mu(x)
\end{array}\right)
\]
\[
-\int_{0}^{\mu(x)}\left[BA_{x}^{'}(x,t)+\rho(x)A_{t}^{'}(x,t)B\right]\left(\begin{array}{c}
\sin\lambda t\\
-\cos\lambda t
\end{array}\right)dt.
\] Taking into account (\ref{4}) and (\ref{59}),
\[
B\varphi'(x,\lambda)+\Omega(x)\varphi(x,\lambda)=\lambda\rho(x)\varphi(x,\lambda)
\] is obtained. For $x=0$, from (\ref{3}) we get (\ref{45}).
\end{proof}

\subsection{Derivation of Parseval Equality}
\begin{lemma} \label{lm7}
For each function $g(x)\in L_{2,\rho}(0,\pi;\mathbb{C}^{2})$, the following relation holds:
\begin{equation}\label{62}
\int_{0}^{\pi}\left(g_{1}^{2}(x)+g_{2}^{2}(x)\right)\rho(x)dx=\sum_{n=-\infty}^{\infty}
\frac{1}{\alpha_{n}}\left(\int_{0}^{\pi}\tilde{\varphi}(t,\lambda_{n})g(t)\rho(t)dt\right)^{2}.
\end{equation}
\end{lemma}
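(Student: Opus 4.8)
The plan is to establish the Parseval equality \eqref{62} by constructing, from the given spectral data, a Hilbert space structure in which the eigenfunctions $\{\varphi(x,\lambda_n)\}$ form an orthogonal basis with weights $\alpha_n$, and then to show this abstract structure coincides with the $L_{2,\rho}$ inner product. The natural route is to pass through the kernel $F_0(x,t)$ and the main equation \eqref{28}, exactly as in the proof of Lemma~\ref{thm4}. First I would take an arbitrary $g \in L_{2,\rho}(0,\pi;\mathbb{C}^2)$ and define its ``Fourier coefficients'' $b_n := \frac{1}{\alpha_n}\int_0^\pi \tilde\varphi(t,\lambda_n)g(t)\rho(t)\,dt$, so that the right-hand side of \eqref{62} is $\sum_n \alpha_n b_n^2$. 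The goal is to prove this equals $\|g\|^2_{L_{2,\rho}}$.

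Next I would exploit the representation \eqref{3} together with the operator $L_x$ from \eqref{41} to transfer the question about $\{\varphi(x,\lambda_n)\}$ to the corresponding question about the free system $\{\varphi_0(x,\lambda_n)\}$ and ultimately $\{\varphi_0(x,\lambda_n^0)\}$. The key identity is the classical Parseval equality for the unperturbed system: by the completeness of $\{\varphi_0(t,\lambda_n^0)\}$ in $L_{2,\rho}(0,\pi;\mathbb{C}^2)$ (used already at the end of Lemma~\ref{thm4}) one has, with the normalizing weight $\mu(\pi)$,
\[
\int_0^\pi \bigl(g_1^2 + g_2^2\bigr)\rho\,dx = \sum_{n=-\infty}^{\infty}\frac{1}{\mu(\pi)}\left(\int_0^\pi \tilde\varphi_0(t,\lambda_n^0)g(t)\rho(t)\,dt\right)^2.
\]
The strategy is then to show that replacing $\varphi_0(\cdot,\lambda_n^0)$ by $\varphi(\cdot,\lambda_n)$ and $\tfrac{1}{\mu(\pi)}$ by $\tfrac{1}{\alpha_n}$ preserves the sum. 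I would write $\varphi(x,\lambda_n) = \varphi_0(x,\lambda_n) + \int_0^{\mu(x)}A(x,t)\binom{\sin\lambda_n t}{-\cos\lambda_n t}\,dt$ from \eqref{3}, substitute into the coefficients $b_n$, and use the difference structure of $F_0$ in \eqref{29} to cancel the perturbation terms against the integral operator with kernel $F_0$. Concretely, I expect the computation to reduce to the statement that the operator $I + \mathcal{F}_0$ (with $\mathcal{F}_0$ the integral operator having kernel $F_0$) acts as the change of basis between the two systems, and that its action is precisely the one recorded in the main equation \eqref{28}; the telescoping between the $\alpha_n$-weighted and $\mu(\pi)$-weighted sums should then force equality.

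I anticipate the \textbf{main obstacle} to be the rigorous justification of interchanging the infinite summation over $n$ with the integrations in $t$ and $\xi$, since the individual terms $\tfrac{1}{\alpha_n}\varphi(x,\lambda_n)\tilde\varphi(t,\lambda_n)$ are not summable on their own and only the \emph{differences} appearing in $F_0$ converge. This is the same delicacy that makes \eqref{34} and the manipulations in Theorem~\ref{thm3} work only after forming the regularized kernel. I would handle it by working throughout with partial sums $\sum_{n=-N}^{N}$, invoking the $l_2$-asymptotics \eqref{8} and \eqref{10} for $\lambda_n$ and $\alpha_n$ to control the tails uniformly, and passing to the limit $N\to\infty$ only at the end using the already-established $L_2$-convergence of the expansion in Theorem~\ref{thm2}(b) and the bound from Theorem~\ref{thm1}(ii). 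Once the limit is justified, the remaining algebra is routine substitution and the result \eqref{62} follows.
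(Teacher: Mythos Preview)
Your plan captures the right overall direction---anchor in the unperturbed Parseval identity and transfer via the kernel $A$---but there is a conceptual slip and a missing mechanism that the paper's proof supplies explicitly.

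The slip: the operator $I+\mathcal{F}_0$ is \emph{not} the change of basis between $\{\varphi_0(\cdot,\lambda_n^0)\}$ and $\{\varphi(\cdot,\lambda_n)\}$. The change of basis is the Volterra operator with kernel $A(x,\mu(t))$ acting as in \eqref{3} (equivalently \eqref{63} after the substitution \eqref{36}). The main equation \eqref{28} relates $A$ to $F_0$, but $F_0$ is a Fredholm kernel and does not send eigenfunctions to eigenfunctions; the ``telescoping via $I+\mathcal{F}_0$'' you anticipate will not close as stated.

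What the paper does instead is this. It sets $Q(\lambda):=\int_0^\pi\tilde\varphi(t,\lambda)g(t)\rho(t)\,dt$ and, using the \emph{adjoint} of the Volterra operator, introduces the auxiliary function $h(t)=g(t)+\int_t^\pi\tilde A(s,\mu(t))g(s)\rho(s)\,ds$, so that $Q(\lambda)=\int_0^\pi\tilde\varphi_0(t,\lambda)h(t)\rho(t)\,dt$. Then \eqref{33} together with the unperturbed Parseval gives immediately
\[
\sum_{n}\frac{Q^2(\lambda_n)}{\alpha_n}=\|h\|_{L_{2,\rho}}^{2}+\int_0^\pi\!\!\int_0^\pi \tilde h(x)F(x,t)h(t)\rho(x)\rho(t)\,dx\,dt .
\]
The entire remaining task is to show this equals $\|g\|_{L_{2,\rho}}^2$. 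For that the paper introduces the \emph{inverse} Volterra kernel $H(x,\mu(t))$ (so that $\varphi_0=(I+\mathcal H)\varphi$, equation \eqref{65}), proves the algebraic relation $\tilde H(x,\mu(t))=F(t,x)+\int_0^x A(x,\mu(\xi))F(\xi,t)\rho(\xi)\,d\xi$, and from it derives the key identity
\[
\int_0^\pi F(x,t)h(t)\rho(t)\,dt=\int_0^x H(x,\mu(t))g(t)\rho(t)\,dt-\int_x^\pi\tilde A(t,\mu(x))g(t)\rho(t)\,dt .
\]
Substituting this back and using that $g\leftrightarrow h$ are related by the mutually inverse Volterra operators (equations \eqref{67}, \eqref{68}) collapses the right-hand side to $\|g\|_{L_{2,\rho}}^2$.

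Your proposal does not contain any of these three pieces---the adjoint transform $g\mapsto h$, the inverse kernel $H$, and the intermediate identity for $\int F\,h$---and without them the ``substitute and cancel'' step does not close. The limit-interchange issue you flag as the main obstacle is actually secondary: once the computation is organized through $h$ and $H$, the sums involved are already regularized by the very structure of $F$ in \eqref{33}, and no separate partial-sum argument is needed.
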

\begin{proof}
It follows from (\ref{3}) and (\ref{36}) that
\begin{equation}\label{63}
\varphi(x,\lambda)=\varphi_{0}(x,\lambda)+\int_{0}^{x}A(x,\mu(t))\varphi_{0}(t,\lambda)\rho(t)dt.
\end{equation}
Using the expression
\[
F_{0}(x,t)=\left\{\begin{array}{c}
F(x,t), \ \ \ \ \ \ \ \ \ \ \ \ \ \ x<a, \\
F\left(\frac{x}{\alpha}+a-\frac{a}{\alpha},t\right), \ \ x>a,
\end{array}\right.
\] main equation (\ref{28}) transforms into the following form
\begin{equation}\label{64}
A(x,\mu(t))+F(x,t)+\int_{0}^{x}A(x,\mu(\xi))F(\xi,t)\rho(\xi)d\xi=0.
\end{equation}
From the relation (\ref{63}), we get
\begin{equation}\label{65}
\varphi_{0}(x,\lambda)=\varphi(x,\lambda)+\int_{0}^{x}H(x,\mu(t))\varphi(t,\lambda)\rho(t)dt
\end{equation}
and for the kernel $H(x,\mu(t))$ we have the identity
\begin{equation}\label{66}
\tilde{H}(x,\mu(t))=F(t,x)+\int_{0}^{x}A(x,\mu(\xi))F(\xi,t)\rho(\xi)d\xi.
\end{equation}
Denote 
\[
Q(\lambda):=\int_{0}^{\pi}\tilde{\varphi}(t,\lambda)g(t)\rho(t)dt
\] and using (\ref{63}) transform into the following form
\[
Q(\lambda)=\int_{0}^{\pi}\tilde{\varphi}_{0}(t,\lambda)h(t)\rho(t)dt,
\] where
\begin{equation}\label{67}
h(t)=g(t)+\int_{t}^{\pi}\tilde{A}(s,\mu(t))g(s)\rho(s)ds.
\end{equation}
Similarly, in view of (\ref{65}), we have
\begin{equation}\label{68}
g(t)=h(t)+\int_{t}^{\pi}\tilde{H}(s,\mu(t))h(s)\rho(s)ds.
\end{equation}
According to (\ref{67}),
\[
\int_{0}^{\pi}F(x,t)h(t)\rho(t)dt=\int_{0}^{\pi}F(x,t)\left[g(t)+\int_{t}^{\pi}
\tilde{A}(s,\mu(t))g(s)\rho(s)ds\right]\rho(t)dt
\]
\[
=\int_{0}^{\pi}\left[F(x,t)+\int_{0}^{t}F(x,s)\tilde{A}(t,\mu(s))\rho(s)ds\right]g(t)\rho(t)dt
\]
\[
=\int_{0}^{x}\left[F(x,t)+\int_{0}^{t}F(x,s)\tilde{A}(t,\mu(s))\rho(s)ds\right]g(t)\rho(t)dt+
\]
\[
+\int_{x}^{\pi}\left[F(x,t)+\int_{0}^{t}F(x,s)\tilde{A}(t,\mu(s))\rho(s)ds\right]g(t)\rho(t)dt.
\]
It follows from (\ref{64}) and (\ref{66}) that 
\begin{equation}\label{69}
\int_{0}^{\pi}F(x,t)h(t)\rho(t)dt=\int_{0}^{x}H(x,\mu(t))g(t)\rho(t)dt-\int_{x}^{\pi}\tilde{A}(t,\mu(x))g(t)\rho(t)dt.
\end{equation}
From (\ref{33}) and Parseval equality we obtain,
\[
\int_{0}^{\pi}\left(h_{1}^{2}(t)+h_{2}^{2}(t)\right)\rho(t)dt+\int_{0}^{\pi}\tilde{h}(x)
F(x,t)h(t)\rho(t)\rho(x)dxdt=
\]
\begin{eqnarray}\nonumber
\lefteqn{=\int_{0}^{\pi}\left(h_{1}^{2}(t)+h_{2}^{2}(t)\right)\rho(t)dt+\sum_{n=-\infty}^{\infty}\frac{1}{\alpha_{n}}
\left(\int_{0}^{\pi}\tilde{\varphi}_{0}(t,\lambda_{n})h(t)\rho(t)dt\right)^{2}}
\\ \nonumber
&&-\sum_{n=-\infty}^{\infty}\frac{1}{\mu(\pi)}
\left(\int_{0}^{\pi}\tilde{\varphi}_{0}(t,\lambda_{n}^{0})h(t)\rho(t)dt\right)^{2} 
\\ \nonumber
&=&\sum_{n=-\infty}^{\infty}\frac{1}{\alpha_{n}}
\left(\int_{0}^{\pi}\tilde{\varphi}_{0}(t,\lambda_{n})h(t)\rho(t)dt\right)^{2}
=\sum_{n=-\infty}^{\infty}\frac{Q^{2}(\lambda_{n})}{\alpha_{n}}. \ \ \ \ \nonumber
\end{eqnarray}
Taking into account (\ref{69}), we have
\[
\sum_{n=-\infty}^{\infty}\frac{Q^{2}(\lambda_{n})}{\alpha_{n}}=
\int_{0}^{\pi}\left(h_{1}^{2}(t)+h_{2}^{2}(t)\right)\rho(t)dt+
\]
\[
+\int_{0}^{\pi}\tilde{h}(x)\left(\int_{0}^{x}H(x,\mu(t))g(t)\rho(t)dt\right)\rho(x)dx-
\]
\[
-\int_{0}^{\pi}\tilde{h}(x)\left(\int_{x}^{\pi}\tilde{A}(t,\mu(x))g(t)\rho(t)dt\right)\rho(x)dx
\]
\[
=\int_{0}^{\pi}\left(h_{1}^{2}(t)+h_{2}^{2}(t)\right)\rho(t)dt+\int_{0}^{\pi}\left(\int_{t}^{\pi}
\tilde{h}(x)H(x,\mu(t))\rho(x)dx\right)g(t)\rho(t)dt-
\]
\[
-\int_{0}^{\pi}\tilde{h}(x)\left(\int_{x}^{\pi}\tilde{A}(t,\mu(x))g(t)\rho(t)dt\right)\rho(x)dx,
\] whence by formulas (\ref{67}) and (\ref{68}), 
\begin{eqnarray}\nonumber
\lefteqn{\sum_{n=-\infty}^{\infty}\frac{Q^{2}(\lambda_{n})}{\alpha_{n}}=
\int_{0}^{\pi}\left(h_{1}^{2}(t)+h_{2}^{2}(t)\right)\rho(t)dt+}
\\ \nonumber
&&+\int_{0}^{\pi}\left(\tilde{g}(t)-\tilde{h}(t)\right)g(t)\rho(t)dt-
\int_{0}^{\pi}\tilde{h}(x)\left(h(x)-g(x)\right)\rho(x)dx
\\ \nonumber
&=&\int_{0}^{\pi}\left(g_{1}^{2}(t)+g_{2}^{2}(t)\right)\rho(t)dt \nonumber
\end{eqnarray}
is obtained, i.e., the relation (\ref{62}) is valid.
\end{proof}
\begin{corollary}
For any function $f(x)$ and $g(x)$ $\in L_{2,\rho}(0,\pi;\mathbb{C}^{2})$, the relation holds:
\begin{equation}\label{70}
\int_{0}^{\pi}\tilde{g}(x)f(x)\rho(x)dx=\sum_{n=-\infty}^{\infty}\frac{1}{\alpha_{n}}
\left(\int_{0}^{\pi}\tilde{g}(t)\varphi(t,\lambda_{n})\rho(t)dt\right)
\left(\int_{0}^{\pi}\tilde{\varphi}(t,\lambda_{n})f(t)\rho(t)dt\right).
\end{equation} 
\end{corollary}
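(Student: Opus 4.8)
The plan is to obtain (\ref{70}) by polarizing the quadratic Parseval identity (\ref{62}) already established in Lemma \ref{lm7}, so that the lemma does all of the analytic work and only elementary algebra remains. First I would record the observation that reconciles the two factors on the right-hand side of (\ref{70}): for column vectors $g,\varphi\in\mathbb{C}^{2}$ the product $\tilde{g}\varphi$ is a scalar, hence equal to its own transpose $\tilde{\varphi}g$, so that
\[
\int_{0}^{\pi}\tilde{g}(t)\varphi(t,\lambda_{n})\rho(t)dt=\int_{0}^{\pi}\tilde{\varphi}(t,\lambda_{n})g(t)\rho(t)dt.
\]
Writing this common value as $c_{n}(g)$, the map $g\mapsto c_{n}(g)$ is linear, identity (\ref{62}) reads $\int_{0}^{\pi}\bigl(g_{1}^{2}+g_{2}^{2}\bigr)\rho\,dx=\sum_{n}\alpha_{n}^{-1}c_{n}(g)^{2}$, and the desired identity (\ref{70}) is exactly $\int_{0}^{\pi}\tilde{g}f\rho\,dx=\sum_{n}\alpha_{n}^{-1}c_{n}(g)c_{n}(f)$.

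Next I would introduce the bilinear form $B(g,f):=\int_{0}^{\pi}\tilde{g}(x)f(x)\rho(x)dx$ and note that it is symmetric, since its integrand $\tilde{g}f=g_{1}f_{1}+g_{2}f_{2}$ is invariant under interchanging $f$ and $g$. Applying (\ref{62}) to the two functions $g+f$ and $g-f$ and using the linearity of $c_{n}$, I would subtract the two resulting identities to obtain
\[
B(g+f,g+f)-B(g-f,g-f)=\sum_{n=-\infty}^{\infty}\frac{1}{\alpha_{n}}\Bigl[\bigl(c_{n}(g)+c_{n}(f)\bigr)^{2}-\bigl(c_{n}(g)-c_{n}(f)\bigr)^{2}\Bigr]=4\sum_{n=-\infty}^{\infty}\frac{c_{n}(g)c_{n}(f)}{\alpha_{n}}.
\]
Since $B$ is bilinear and symmetric, the left-hand side equals $4B(g,f)$; dividing by $4$ and invoking the scalar-transpose identity above to rewrite $c_{n}(g)$ as the first factor in (\ref{70}) gives the claim.

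There is essentially no analytic obstacle, and the only point deserving a remark is the handling of the series. Each of the two sums $\sum_{n}\alpha_{n}^{-1}c_{n}(g\pm f)^{2}$ converges by Lemma \ref{lm7}, so their termwise difference is a legitimate convergent series; moreover the target series $\sum_{n}\alpha_{n}^{-1}c_{n}(g)c_{n}(f)$ converges absolutely by Cauchy--Schwarz applied to the two convergent sums $\sum_{n}\alpha_{n}^{-1}c_{n}(g)^{2}$ and $\sum_{n}\alpha_{n}^{-1}c_{n}(f)^{2}$, which justifies the rearrangement. Thus the corollary follows immediately from the polarization, with Lemma \ref{lm7} carrying the full weight of the argument.
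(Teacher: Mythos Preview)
Your proof is correct and is precisely the argument the paper has in mind: the statement is recorded as a corollary with no proof given, the intended justification being the standard polarization of the quadratic Parseval identity (\ref{62}) from Lemma \ref{lm7}. Your observation that $\tilde{g}\varphi=\tilde{\varphi}g$ cleanly accounts for the asymmetric appearance of the two factors in (\ref{70}), and your convergence remark via Cauchy--Schwarz is the right way to make the series manipulation rigorous.
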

\begin{lemma} \label{lm2}
For any $f(x)\in W_{2}^{1}[0,\pi]$, the expansion formula
\begin{equation}\label{71}
f(x)=\sum_{n=-\infty}^{\infty}c_{n}\varphi(x,\lambda_{n})
\end{equation}
is valid, where
\[
c_{n}=\frac{1}{\alpha_{n}}\int_{0}^{\pi}\tilde{\varphi}(x,\lambda_{n})f(x)\rho(x)dx.
\]
\end{lemma}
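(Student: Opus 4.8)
The plan is to prove the expansion formula by combining the completeness of the eigenfunction system (Theorem \ref{thm2}) with the Parseval-type identity just established in Lemma \ref{lm7} and its Corollary. The strategy is the standard one for self-adjoint-type operators: first show the formal Fourier series $\sum_n c_n \varphi(x,\lambda_n)$ with $c_n = \frac{1}{\alpha_n}\int_0^\pi \tilde\varphi(x,\lambda_n) f(x)\rho(x)\,dx$ converges in $L_{2,\rho}$, then upgrade to pointwise (in fact uniform) convergence using the extra smoothness $f \in W_2^1[0,\pi]$.

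First I would use the integral representation \eqref{63}, $\varphi(x,\lambda)=\varphi_0(x,\lambda)+\int_0^x A(x,\mu(t))\varphi_0(t,\lambda)\rho(t)\,dt$, together with its inverse \eqref{65} involving the kernel $H$, to transfer the problem to the unperturbed system $\{\varphi_0(x,\lambda_n)\}$. Concretely, for $f \in W_2^1[0,\pi]$ set $h(t)=f(t)+\int_t^\pi \tilde A(s,\mu(t))f(s)\rho(s)\,ds$ as in \eqref{67}; then the Fourier coefficients of $f$ against $\varphi(x,\lambda_n)$ coincide with those of $h$ against the model functions $\varphi_0(x,\lambda_n)$. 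Since the map $f \mapsto h$ preserves the $W_2^1$ class (the kernel $A$ is smooth enough by the representation \eqref{3}), I can reduce the claim to the known expansion theorem for the model system $\left\{\left(\begin{smallmatrix}\sin\lambda_n x\\ -\cos\lambda_n x\end{smallmatrix}\right)\right\}$, whose uniform convergence for $W_2^1$ functions follows from classical Fourier analysis on $[0,\mu(\pi)]$ and the asymptotics \eqref{8}, \eqref{10}.

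The key steps, in order, are: (i) verify $L_{2,\rho}$-convergence of the series \eqref{71} directly from Parseval's equality \eqref{62}, which gives $\sum_n \alpha_n |c_n|^2 = \|f\|^2 < \infty$ and hence that the partial sums form a Cauchy sequence; (ii) identify the limit as $f$ itself by using completeness (Theorem \ref{thm2}a) — any function orthogonal to all $\varphi(x,\lambda_n)$ must vanish, so the series can only converge to $f$; (iii) promote this to uniform convergence by writing the partial-sum remainder as an integral against the transformed function $h$, applying the classical uniform-convergence result for the model trigonometric system, and transporting it back through the bounded integral operator with kernel $A(x,\mu(t))$, which is continuous in $x$ and preserves uniform convergence.

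The main obstacle I expect is step (iii): controlling uniform convergence rather than mere $L_{2,\rho}$-convergence. This requires the $W_2^1$ regularity of $f$ in an essential way — one integrates by parts to gain a factor $1/\lambda_n$ in the coefficients, so that the residual series is dominated by $\sum_n \frac{1}{|\lambda_n|}\cdot(\text{$\ell_2$ sequence})$, which converges absolutely and uniformly by Cauchy–Schwarz together with $\sum_n |\lambda_n|^{-2}<\infty$ (a consequence of the asymptotics \eqref{8}). The delicate point is that the discontinuity of $\rho$ at $x=a$, encoded in $\mu(x)$, forces one to split all estimates across the two intervals $[0,a]$ and $(a,\pi]$ and to track the piecewise structure through the change of variables $\xi \mapsto \frac{\xi}{\alpha}+a-\frac{a}{\alpha}$; once this bookkeeping is handled exactly as in the proof of Theorem \ref{thm3}, the uniform bound follows and \eqref{71} is established.
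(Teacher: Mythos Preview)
Your approach is workable but takes a longer path than the paper. The paper's proof is short and direct: it first integrates by parts using the differential equation from Lemma~\ref{lm6}, writing
\[
c_n=\frac{-1}{\alpha_n\lambda_n}\bigl[\tilde\varphi(\pi,\lambda_n)Bf(\pi)-\tilde\varphi(0,\lambda_n)Bf(0)\bigr]
+\frac{1}{\alpha_n\lambda_n}\int_0^\pi \tilde\varphi(x,\lambda_n)\bigl[Bf'(x)+\Omega(x)f(x)\bigr]\,dx,
\]
which together with the asymptotics of Theorem~\ref{thm1} yields enough decay for $\sum_n c_n\varphi(x,\lambda_n)$ to converge absolutely and uniformly to some function $f^*$. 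Then the bilinear Parseval identity \eqref{70} (the Corollary to Lemma~\ref{lm7}) is applied with an arbitrary test function $g$ to obtain $\int_0^\pi \tilde g(x)f(x)\rho(x)\,dx=\int_0^\pi \tilde g(x)f^*(x)\rho(x)\,dx$, whence $f=f^*$. There is no transfer to a model system, no separate $L_{2,\rho}$-convergence step, and no bookkeeping across $x=a$; uniform convergence and identification of the limit happen in one stroke via \eqref{70}.

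One caution about your step (ii): you invoke Theorem~\ref{thm2}a for completeness, but this lemma sits in the inverse-problem section, where $\varphi$ is the function \emph{reconstructed} from the main equation and is not yet known to be an eigenfunction of any boundary value problem --- so Theorem~\ref{thm2} is not logically available here. Completeness does follow immediately from the Parseval equality \eqref{62} you already have (if all coefficients vanish then $\|f\|=0$), so the gap is easily closed, but you should justify it that way. Your detour through the kernels $A$, $H$ and the model trigonometric system is correct in principle but unnecessary: the paper's use of \eqref{70} bypasses all of it.
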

\begin{proof}
Consider the series 
\begin{equation}\label{72}
f^{*}(x)=\sum_{n=-\infty}^{\infty}c_{n}\varphi(x,\lambda_{n}),
\end{equation}
where
\begin{equation}\label{73}
c_{n}:=\frac{1}{\alpha_{n}}\int_{0}^{\pi}\tilde{\varphi}(x,\lambda_{n})f(x)\rho(x)dx.
\end{equation}
Using Lemma \ref{lm6} and integrating by parts , we get
\[
c_{n}=\frac{1}{\alpha_{n}\lambda_{n}}\int_{0}^{\pi}\left[-\frac{\partial}{\partial x}
\tilde{\varphi}(x,\lambda_{n})B+\tilde{\varphi}(x,\lambda_{n})\Omega(x)\right]f(x)dx=
\]
\[
=\frac{-1}{\alpha_{n}\lambda_{n}}\left[\tilde{\varphi}(\pi,\lambda_{n})Bf(\pi)-
\tilde{\varphi}(0,\lambda_{n})Bf(0)\right]+
\]
\[
+\frac{1}{\alpha_{n}\lambda_{n}}\int_{0}^{\pi}\tilde{\varphi}(x,\lambda_{n})
\left[Bf'(x)+\Omega(x)f(x)\right]dx.
\]
Applying the asymptotic formulas in Theorem \ref{thm1}, we find $\left\{c_{n}\right\}\in l_{2}.$ Consequently
the series (\ref{72}) converges absolutely and uniformly on $[0,\pi]$. According to (\ref{70}) and (\ref{73}), we have
\[
\int_{0}^{\pi}\tilde{g}(x)f(x)\rho(x)dx=\ \ \ \ \ \ \ \ \ \ \ \ \ \ \ \ \ \ \ \ \ \ \ \ \ \ \ \ \ \ \ \ \ \ \
\ \ \ \ \ \ \ \ \ \ \ \ \ \ \ \ \ \ \ \ \ \ \ \ \ \ \ \ \
\]
\[
=\sum_{n=-\infty}^{\infty}\frac{1}{\alpha_{n}}
\left(\int_{0}^{\pi}\tilde{g}(t)\varphi(t,\lambda_{n})\rho(t)dt\right)\left(\int_{0}^{\pi}
\tilde{\varphi}(t,\lambda_{n})f(t)\rho(t)dt\right) \ \ \ \ \ \ \ \ \ \ \ \ \ \ 
\]
\[
=\sum_{n=-\infty}^{\infty}c_{n}\left(\int_{0}^{\pi}\tilde{g}(t)\varphi(t,\lambda_{n})\rho(t)dt\right)
=\int_{0}^{\pi}\tilde{g}(t)\left(\sum_{n=-\infty}^{\infty}c_{n}\varphi(t,\lambda_{n})\right)\rho(t)dt
\]
\[
=\int_{0}^{\pi}\tilde{g}(t)f^{*}(t)\rho(t)dt. \ \ \ \ \ \ \ \ \ \ \ \ \ \ \ \ \ \ \ \ \ \ \ \ \ \ \ \ \ \ \ \ \ \ \
\ \ \ \ \ \ \ \ \ \ \ \ \ \ \ \ \ \ \ \ \ \ \ \ \ \ \ \ \ \ \ \
\] Since $g(x)$ is arbitrary, $f(x)=f^{*}(x)$ is obtained, i.e., the expansion formula (\ref{71}) is found.
\end{proof}

\subsection{Derivation of Boundary Condition}
\begin{lemma} \label{lm4}
The following equality holds:
\begin{equation}\label{74}
\sum_{n=-\infty}^{\infty}\frac{\varphi(x,\lambda_{n})}{\alpha_{n}\beta_{n}}=0.
\end{equation}
\end{lemma}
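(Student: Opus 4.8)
The plan is to recognize the summand as a residue and then evaluate the symmetric partial sums by a contour integral that is shown to vanish. First I would use the identity (\ref{7}), $\dot\Delta(\lambda_n)=\beta_n\alpha_n$, together with the construction $\beta_n=\dot\Delta(\lambda_n)/\alpha_n\neq 0$: the condition $\beta_n\neq 0$ forces $\dot\Delta(\lambda_n)\neq 0$, so every $\lambda_n$ is a \emph{simple} zero of $\Delta$, and by Proposition \ref{p1} these are the only zeros of $\Delta$. Hence $\varphi(x,\lambda)/\Delta(\lambda)$ is meromorphic with simple poles exactly at the $\lambda_n$, and
\[
\frac{\varphi(x,\lambda_n)}{\alpha_n\beta_n}=\frac{\varphi(x,\lambda_n)}{\dot\Delta(\lambda_n)}
=\underset{\lambda=\lambda_n}{Res}\,\frac{\varphi(x,\lambda)}{\Delta(\lambda)} .
\]
Thus the series (\ref{74}) is the sum of the residues of the vector-valued function $\varphi(x,\lambda)/\Delta(\lambda)$.

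Next I would pass to the symmetric partial sums and close them up on the contour $\Gamma_N=\{\lambda:|\lambda|=\lambda_N^0+\tfrac{\pi}{2\mu(\pi)}\}$ already used in Theorem \ref{thm3}. Since $\lambda_n=\lambda_n^0+\epsilon_n$ with $\epsilon_n\to0$, for large $N$ the circle $\Gamma_N$ separates $\lambda_N$ from $\lambda_{N+1}$ and $\lambda_{-N}$ from $\lambda_{-N-1}$, so it encloses precisely $\lambda_{-N},\dots,\lambda_N$. The residue theorem then gives
\[
\sum_{|n|\le N}\frac{\varphi(x,\lambda_n)}{\alpha_n\beta_n}
=\frac{1}{2\pi i}\int_{\Gamma_N}\frac{\varphi(x,\lambda)}{\Delta(\lambda)}\,d\lambda .
\]
The whole problem is now reduced to showing that this contour integral tends to $0$ as $N\to\infty$.

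For the estimate I would combine the lower bound on $\Delta$ with the upper bound on $\varphi$. From (\ref{12}) and the standard fact that $|\sin\lambda\mu(\pi)|\ge c\,e^{|Im\lambda|\mu(\pi)}$ on $\Gamma_N$ (the circle stays a fixed distance from the zeros of $\sin\lambda\mu(\pi)$), one gets $|\Delta(\lambda)|\ge c_1|\lambda|\,e^{|Im\lambda|\mu(\pi)}$ on $\Gamma_N$ for large $N$; from (\ref{11}) one has $\|\varphi(x,\lambda)\|\le C\,e^{|Im\lambda|\mu(x)}$. Therefore
\[
\left\|\frac{\varphi(x,\lambda)}{\Delta(\lambda)}\right\|
\le \frac{C}{|\lambda|}\,e^{-|Im\lambda|(\mu(\pi)-\mu(x))},\qquad \lambda\in\Gamma_N .
\]
Writing $\lambda=R_N e^{i\theta}$ with $R_N=\lambda_N^0+\tfrac{\pi}{2\mu(\pi)}$ and $\delta:=\mu(\pi)-\mu(x)>0$ for $x\in(0,\pi)$, the integral is bounded by $\tfrac{C}{2\pi}\int_0^{2\pi}e^{-R_N\delta|\sin\theta|}\,d\theta$, which is a Jordan-type integral converging to $0$ as $R_N\to\infty$. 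This yields (\ref{74}).

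The main obstacle, and the point deserving care, is exactly the exponential factor $e^{-|Im\lambda|(\mu(\pi)-\mu(x))}$: the decay of the contour integral rests entirely on the \emph{strict} inequality $\mu(x)<\mu(\pi)$, i.e. on $x<\pi$. At the endpoint $x=\pi$ the exponent vanishes and the leading $1/\lambda$ behaviour of $\varphi_1(\pi,\lambda)/\Delta(\lambda)$ no longer integrates to zero, so the identity must be understood on $(0,\pi)$ (consistently with the ranges in Theorem \ref{thm3} and Lemma \ref{thm4}); I would state it accordingly. A secondary subtlety is that the series (\ref{74}) converges only conditionally — the asymptotics $\alpha_n\beta_n=\dot\Delta(\lambda_n)\sim n\pi(-1)^n$ make the terms $O(1/n)$ — which is precisely why the symmetric partial sums associated to $\Gamma_N$, rather than absolute convergence, must be used throughout the argument.
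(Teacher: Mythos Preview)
Your argument is correct and follows essentially the same route as the paper: rewrite each term via $\alpha_n\beta_n=\dot\Delta(\lambda_n)$ as a residue of $\varphi(x,\lambda)/\Delta(\lambda)$, convert the symmetric partial sum into the contour integral over $\Gamma_N$, and kill the integral using the lower bound $|\Delta(\lambda)|\ge C_\delta|\lambda|e^{|Im\lambda|\mu(\pi)}$ together with the growth estimate (\ref{11}) for $\varphi$. The only cosmetic discrepancy is that in Section~5 the function $\Delta$ is \emph{defined} by the Hadamard product (\ref{a}), so the asymptotic you need is (\ref{76}) (obtained from (\ref{a}) via Marchenko's lemma) rather than (\ref{12}); your additional remarks on the endpoint $x=\pi$ and on the symmetric summation are well taken and sharper than what the paper spells out.
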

\begin{proof}
Using residue theorem, we get
\begin{equation}\label{75}
\sum_{n=-\infty}^{\infty}\frac{\varphi(x,\lambda_{n})}{\alpha_{n}\beta_{n}}=
\sum_{n=-\infty}^{\infty}\frac{\varphi(x,\lambda_{n})}{\dot{\Delta}(\lambda_{n})}=
\sum_{n=-\infty}^{\infty}\underset{\lambda=\lambda_{n}}{Res}\frac{\varphi(x,\lambda)}{\Delta(\lambda)}
=\frac{1}{2\pi i}\int_{\Gamma_{N}}\frac{\varphi(x,\lambda)}{\Delta(\lambda)}d\lambda,
\end{equation} 
where $\Gamma_{N}=\left\{\lambda:\left|\lambda\right|=\frac{N\pi}{\mu(\pi)}+\frac{\pi}{2\mu(\pi)}\right\}$. 
From (\ref{a}) and (\cite{Mar}, Lemma 3.4.2),

\begin{equation}\label{76}
\Delta(\lambda)=\lambda\sin\lambda\mu(\pi)+O(e^{\left|Im\lambda\right|\mu(\pi)}). 
\end{equation}
We denote $G_{\delta}=\left\{\lambda:\left|\lambda-\frac{n\pi}{\mu(\pi)}\right|\geq\delta, \ \ n=0,\pm 1,\pm 2...\right\}$
for some small fixed $\delta>0$ and $\left|\sin\lambda\mu(\pi)\right|\geq C_{\delta}e^{\left|Im\lambda\right|\mu(\pi)}$,
$\lambda\in G_{\delta}$, where $C_{\delta}$ positive number. Therefore, we have
\[
\left|\Delta(\lambda)\right|\geq C_{\delta}\left|\lambda\right|e^{\left|Im\lambda\right|\mu(\pi)}, \ \ \lambda\in G_{\delta}.
\] Using this inequality and (\ref{11}), we obtain (\ref{74}). 
\end{proof}
\begin{theorem} \label{thm7}
The following relation is valid:
\[
\left(\lambda_{n}+h_{1}\right)\varphi_{1}(\pi,\lambda_{n})+h_{2}\varphi_{2}(\pi,\lambda_{n})=0.
\]
\end{theorem}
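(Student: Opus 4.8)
The plan is to establish the equivalent assertion that the entire function
\[
d(\lambda):=(\lambda+h_{1})\varphi_{1}(\pi,\lambda)+h_{2}\varphi_{2}(\pi,\lambda),
\]
formed from the reconstructed $\varphi(x,\lambda)$, vanishes at each $\lambda_{n}$; write $\gamma_{n}:=d(\lambda_{n})$. First I would derive a Lagrange-type identity. By Lemma \ref{lm6} every $\varphi(x,\lambda_{n})$ satisfies $B\varphi'+\Omega\varphi=\lambda_{n}\rho\varphi$ together with (\ref{45}); since $\tilde{B}=-B$ and $\Omega$ is symmetric, differentiation gives $\frac{d}{dx}\big[\tilde{\varphi}(x,\lambda_{m})B\varphi(x,\lambda_{n})\big]=(\lambda_{n}-\lambda_{m})\rho(x)\tilde{\varphi}(x,\lambda_{m})\varphi(x,\lambda_{n})$. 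Integrating over $[0,\pi]$ the endpoint $x=0$ contributes nothing because of (\ref{45}), while at $x=\pi$ I would substitute $\varphi_{2}(\pi,\lambda_{k})=h_{2}^{-1}\big(\gamma_{k}-(\lambda_{k}+h_{1})\varphi_{1}(\pi,\lambda_{k})\big)$, which produces
\[
(\lambda_{n}-\lambda_{m})P_{nm}=\frac{1}{h_{2}}\big(\varphi_{1}(\pi,\lambda_{m})\gamma_{n}-\varphi_{1}(\pi,\lambda_{n})\gamma_{m}\big),\qquad n\neq m,
\]
where $P_{nm}:=\int_{0}^{\pi}\tilde{\varphi}(x,\lambda_{m})\varphi(x,\lambda_{n})\rho(x)\,dx+h_{2}^{-1}\varphi_{1}(\pi,\lambda_{m})\varphi_{1}(\pi,\lambda_{n})$, so that $P_{nn}=\alpha_{n}$ by the definition of the normalizing numbers.

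Next I would couple this with the completeness results of Section~5. Expanding $\varphi(x,\lambda_{m})$ through Lemma \ref{lm2} and computing the coefficients $c_{n}=\alpha_{n}^{-1}\int_{0}^{\pi}\tilde{\varphi}(x,\lambda_{n})\varphi(x,\lambda_{m})\rho\,dx$ from the relation above (the diagonal term giving $1-c_{m}=\varphi_{1}^{2}(\pi,\lambda_{m})/(h_{2}\alpha_{m})$), a regrouping yields for every $m$
\[
\varphi_{1}(\pi,\lambda_{m})\,S(x)=\sum_{n\neq m}\frac{1}{\alpha_{n}}\,\frac{\varphi_{1}(\pi,\lambda_{m})\gamma_{n}-\varphi_{1}(\pi,\lambda_{n})\gamma_{m}}{\lambda_{n}-\lambda_{m}}\,\varphi(x,\lambda_{n}),\qquad S(x):=\sum_{n}\frac{\varphi_{1}(\pi,\lambda_{n})}{\alpha_{n}}\varphi(x,\lambda_{n}).
\]
Using Lemma \ref{lm4} and $\dot{\Delta}(\lambda_{n})=\alpha_{n}\beta_{n}$ from (\ref{7}), I would argue $S(x)\equiv0$, so that the right-hand side is a null series in the $\varphi(\cdot,\lambda_{n})$. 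Because this system is complete in $L_{2,\rho}(0,\pi;\mathbb{C}^{2})$ yet carries, owing to the spectral parameter in (\ref{2}), a single linear dependency (namely $S\equiv0$), the coefficient sequences must be proportional to $\{\varphi_{1}(\pi,\lambda_{n})/\alpha_{n}\}$; solving the resulting equations forces $\gamma_{n}=(A+B\lambda_{n})\varphi_{1}(\pi,\lambda_{n})$ with constants $A,B$ independent of $n$.

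Finally I would remove $A,B$. The function $E(\lambda):=d(\lambda)-(A+B\lambda)\varphi_{1}(\pi,\lambda)$ is entire, vanishes at every $\lambda_{n}$ and, by the asymptotics (\ref{11})--(\ref{12}), has the same exponential type and leading term as $\Delta$ in (\ref{a}); dividing by $\Delta$ and invoking a Liouville / Phragm\'{e}n--Lindel\"{o}f argument gives $E=k\Delta$ for a constant $k$. Comparing the refined asymptotics of $d$, which through (\ref{11}) read off the very constants $h_{1},h_{2}$, with those of the constructed $\Delta$ then forces $A=B=0$ and $k=1$; hence $d(\lambda)\equiv\Delta(\lambda)$ and $\gamma_{n}=d(\lambda_{n})=0$, which is the claim. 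Together with (\ref{6}) and (\ref{b}) this simultaneously confirms $\psi(x,\lambda_{n})=\beta_{n}\varphi(x,\lambda_{n})$, closing the reconstruction.

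I expect the decisive difficulty to lie in the second step. The intermediate identity $S(x)\equiv0$ is, via $\dot\Delta(\lambda_n)=\alpha_n\beta_n$, essentially equivalent to $\beta_{n}\varphi_{1}(\pi,\lambda_{n})=h_{2}$, i.e. to the conclusion itself, so it cannot be invoked circularly; the genuinely independent input must be the exact one-dimensional overcompleteness of $\{\varphi(\cdot,\lambda_{n})\}$ in $L_{2,\rho}$ and the asymptotic normalization that encodes $h_{1},h_{2}$ in the data. Making the dependency structure precise and discharging the residual gauge constants $A,B$ by this asymptotic comparison is the hard part; the Green-type relation of the first step and the contour estimates already available in Lemma \ref{lm4} are the routine ingredients.
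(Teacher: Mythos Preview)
Your outline has a genuine structural gap, and you have correctly identified its location in your own closing paragraph. In Section~5 the numbers $h_{1},h_{2}$ are \emph{not} part of the data; they are outputs of the reconstruction. Consequently the very definition $d(\lambda)=(\lambda+h_{1})\varphi_{1}(\pi,\lambda)+h_{2}\varphi_{2}(\pi,\lambda)$ is not available at the outset, and every subsequent step that manipulates $\gamma_{n}=d(\lambda_{n})$ is formally circular. Your attempted rescue --- arguing that the null series for $\varphi_{1}(\pi,\lambda_{m})S(x)$ forces the coefficient sequence to be proportional to $\{\varphi_{1}(\pi,\lambda_{n})/\alpha_{n}\}$ because the system has ``a single linear dependency'' --- is exactly the step you would need to make precise, and you do not: the one relation you know, Lemma~\ref{lm4}, involves $\beta_{n}^{-1}$ rather than $\varphi_{1}(\pi,\lambda_{n})$, and identifying the two is again the conclusion. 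The final Phragm\'en--Lindel\"of step cannot repair this, because it presupposes that $A,B$ (and $h_{1},h_{2}$) are already pinned down.

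The paper sidesteps the circularity by never naming $h_{1},h_{2}$ until the last line. It works with the quantities $D_{n}:=\beta_{n}\varphi_{1}(\pi,\lambda_{n})$ and $E_{n}:=\beta_{n}\varphi_{2}(\pi,\lambda_{n})$, combines Lemma~\ref{lm4} with the expansion of Lemma~\ref{lm2} to obtain $a_{kk}-(\beta_{n}/\beta_{k})a_{nk}=\alpha_{k}$, and from the symmetry $a_{nk}=a_{kn}$ deduces that $\beta_{k}^{2}(\alpha_{k}-a_{kk})$ is a constant $H$. Together with your Lagrange identity this gives the purely algebraic relation $D_{k}E_{n}-E_{k}D_{n}=H(\lambda_{k}-\lambda_{n})$. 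The decisive idea you are missing is then elementary: writing this for four pairwise distinct indices $i,j,k,n$ and eliminating yields $(D_{n}-D_{j})(E_{i}-E_{k})=(D_{i}-D_{k})(E_{n}-E_{j})$, which forces $D_{n}$ and $E_{n}$ to be affine functions of $\lambda_{n}$. Only at this point are the coefficients read off from the asymptotics of $\beta_{n}$ and $\varphi(\pi,\lambda_{n})$, and $h_{1},h_{2}$ are \emph{defined} as the resulting constants. Your Green-type identity and the contour input from Lemma~\ref{lm4} are indeed the routine ingredients; the four-index elimination is the missing key.
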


\begin{proof}
From (\ref{74}), we can write for any $n_{0}\in\mathbb{Z}$
\begin{equation}\label{77}
\frac{\varphi(x,\lambda_{n_{0}})}{\alpha_{n_{0}}}=-\sum_{\stackrel{n=-\infty}{n\neq n_{0}}}^{\infty}
\frac{\beta_{n_{0}}\varphi(x,\lambda_{n})}{\alpha_{n}\beta_{n}}
\end{equation}
Let $m\neq n_{0}$ be any fixed number and $f(x)=\varphi(x,\lambda_{k})$. Then substituting (\ref{77}) in (\ref{71})
\[
\varphi(x,\lambda_{k})=\sum_{\stackrel{n=-\infty}{n\neq n_{0}}}^{\infty}c_{nk}\varphi(x,\lambda_{n}),
\] where
\[
c_{nk}=\frac{1}{\alpha_{n}}\int_{0}^{\pi}\left[\tilde{\varphi}(t,\lambda_{n})-\frac{\beta_{n_{0}}}{\beta_{n}}
\tilde{\varphi}(t,\lambda_{n_{0}})\right]\varphi(t,\lambda_{k})\rho(t)dt.
\] The system of functions $\{\varphi_{0}(x,\lambda_{n})\}$, $\left(n\in\mathbb{Z}\right)$ 
is orthogonal in $L_{2,\rho}(0,\pi;\mathbb{C}^{2})$.
Then by (\ref{3}), the system of functions $\{\varphi(x,\lambda_{n})\},$ $\left(n\in\mathbb{Z}\right)$ 
is orthogonal in $L_{2,\rho}(0,\pi;\mathbb{C}^{2})$ as well. 
Therefore, $c_{nk}=\delta_{nk},$ where $\delta_{nk}$ is Kronecker delta. Let us define 
\begin{equation}\label{78}
a_{nk}:=\int_{0}^{\pi}\tilde{\varphi}(t,\lambda_{n})\varphi(t,\lambda_{k})\rho(t)dt. 
\end{equation} 
Using this expression, we have for $n\neq k$
\begin{equation}\label{79}
a_{kk}-\frac{\beta_{n}}{\beta_{k}}a_{nk}=\alpha_{k}.
\end{equation}
It follows from (\ref{78}) that $a_{nk}=a_{kn}$. Taking into account this equality and (\ref{79}),  
\[
\beta_{k}^{2}\left(\alpha_{k}-a_{kk}\right)=\beta_{n}^{2}\left(\alpha_{n}-a_{nn}\right)=H, \ \ \ k\neq n,
\] where $H$ is a constant.
Then, we have
\[
\int_{0}^{\pi}\tilde{\varphi}(t,\lambda_{n})\varphi(t,\lambda_{n})\rho(t)dt=\alpha_{n}-\frac{H}{\beta_{n}^{2}}
\]
and
\[
\int_{0}^{\pi}\tilde{\varphi}(t,\lambda_{k})\varphi(t,\lambda_{n})\rho(t)dt=-\frac{H}{\beta_{k}\beta_{n}}, \ \ k\neq n.
\] 
It is easily obtained that for $k\neq n$, 
\[
\int_{0}^{\pi}\left[\varphi_{1}(x,\lambda_{k})\varphi_{1}(x,\lambda_{n})+
\varphi_{2}(x,\lambda_{k})\varphi_{2}(x,\lambda_{n})\right]\rho(x)dx=
\]
\[
=\frac{1}{(\lambda_{k}-\lambda_{n})}\left[\varphi_{2}(\pi,\lambda_{k})\varphi_{1}(\pi,\lambda_{n})
-\varphi_{1}(\pi,\lambda_{k})\varphi_{2}(\pi,\lambda_{n})\right]
=-\frac{H}{\beta_{k}\beta_{n}}.
\] According to the last equation, for $n\neq k,$
\begin{equation}\label{80}
\beta_{k}\varphi_{2}(\pi,\lambda_{k})\beta_{n}\varphi_{1}(\pi,\lambda_{n})
-\beta_{k}\varphi_{1}(\pi,\lambda_{k})\beta_{n}\varphi_{2}(\pi,\lambda_{n})=-H{(\lambda_{k}-\lambda_{n})}.
\end{equation}
We denote 
\begin{equation}\label{81}
D_{n}:=\beta_{n}\varphi_{1}(\pi,\lambda_{n}), \ \ \ \  
E_{n}:=\beta_{n}\varphi_{2}(\pi,\lambda_{n}).
\end{equation}
Then, we can rewrite equation (\ref{80}) as follows
\begin{equation}\label{82}
D_{k}E_{n}-E_{k}D_{n}=H{(\lambda_{k}-\lambda_{n})}, \ \ \ n\neq k.
\end{equation} 
Let $i,\ j,\ k, \ n$ be pairwise distinct integers, then we get
\[
\begin{array}{c}
D_{k}E_{n}-E_{k}D_{n}=H{(\lambda_{k}-\lambda_{n})}, \\
D_{n}E_{i}-E_{n}D_{i}=H{(\lambda_{n}-\lambda_{i})}, \\
D_{i}E_{k}-E_{i}D_{k}=H{(\lambda_{i}-\lambda_{k})}.
\end{array}
\] Adding them together, we find
\[
D_{n}(E_{i}-E_{k})+E_{n}(D_{k}-D_{i})=E_{i}D_{k}-D_{i}E_{k}.
\] In this equation, replacing $n$ by $j$, we get another equation
\[
D_{j}(E_{i}-E_{k})+E_{j}(D_{k}-D_{i})=E_{i}D_{k}-D_{i}E_{k}.
\] Subtracting the last two equation,
\[
(D_{n}-D_{j})(E_{i}-E_{k})=(D_{i}-D_{k})(E_{n}-E_{j}).
\] 
In the case of $E_{n}=E_{j}$, for some $n,\ j\in\mathbb{Z}$, then $E_{n}=$const. From (\ref{82}), 
$D_{n}=\kappa_{1}\lambda_{n}+\kappa_{2}$. In the case of $E_{n}\neq E_{j}$, then
we obtain $D_{n}=\kappa_{1}\lambda_{n}+\kappa_{2}$ and $E_{n}=\kappa_{3}\lambda_{n}+\kappa_{4}$, 
where in both cases $\kappa_{1},\ \kappa_{2},\ \kappa_{3},\ \kappa_{4}$ are constant.
Therefore, using these relation in (\ref{81}), we find
\[
\beta_{n}\varphi_{1}(\pi,\lambda_{n})=\kappa_{1}\lambda_{n}+\kappa_{2}, \ \ \
\beta_{n}\varphi_{2}(\pi,\lambda_{n})=\kappa_{3}\lambda_{n}+\kappa_{4}.
\]
Using 
\[
\varphi_{1}(\pi,\lambda_{n})=O\left(\frac{1}{n}\right), \ \ \ 
\varphi_{2}(\pi,\lambda_{n})=(-1)^{n+1}+O\left(\frac{1}{n}\right), 
\] $\lambda_{n}=\frac{n\pi}{\mu(\pi)}+O\left(\frac{1}{n}\right)$ and 
$\beta_{n}=\frac{n\pi}{\mu(\pi)}(-1)^{n}+O(1)$ derived from (\ref{7}) and (\ref{76}), 
we obtain $\kappa_{1}=0$, $\kappa_{3}=-1$. Denoting $h_{2}:=\kappa_{2}$ and $h_{1}:=-\kappa_{4}$, 
\[
h_{2}\varphi_{2}(\pi,\lambda_{n})=-\left(\lambda_{n}+h_{1}\right)\varphi_{1}(\pi,\lambda_{n}), \ \ n\in\mathbb{Z}
\] 
is obtained and it follows from (\ref{82}) that $H=h_{2}$.
\end{proof}

Thus, we have proved the following theorem:

\begin{theorem}
For the sequences $\left\{\lambda_{n}, \alpha_{n}\right\}$, $\left(n\in\mathbb{Z}\right)$, to be the spectral data for a
certain boundary value problem $L(\Omega(x), h_{1}, h_{2})$ of the form (\ref{1}), (\ref{2}) with $\Omega(x)\in L_{2}(0,\pi)$
it is necessary and sufficient that the relations (\ref{8}) and (\ref{10}) hold.
\end{theorem}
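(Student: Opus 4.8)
The statement is a biconditional, so the plan is to separate it into necessity and sufficiency and to spend essentially all the effort on the latter. \textbf{Necessity} is immediate: if $\{\lambda_n,\alpha_n\}$, $(n\in\mathbb{Z})$, are already the eigenvalues and normalizing numbers of some problem $L(\Omega,h_1,h_2)$ of the form (\ref{1}), (\ref{2}) with $\Omega\in L_2(0,\pi)$, then Theorem \ref{thm1} establishes verbatim the asymptotics (\ref{8}) and (\ref{10}). So for this direction there is nothing to prove beyond invoking Theorem \ref{thm1}.

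For \textbf{sufficiency}, I would start from two sequences obeying (\ref{8}) and (\ref{10}) and reconstruct a problem, following the recipe opening Section 5. First, form $F_0(x,t)$ and $F(x,t)$ from (\ref{29}), (\ref{30}). Writing $F_0(x,t)=\tfrac12\left[a(x-\mu(t))+a(x+\mu(t))T\right]$ and using that $a(x)\in W_2^1[0,2\pi]$ (shown as in \cite{Yur}) guarantees these kernels have the regularity needed downstream; this is precisely the step where the $l_2$ hypotheses in (\ref{8}) and (\ref{10}) are consumed. Next, for each fixed $x\in(0,\pi]$, Lemma \ref{thm4} provides a unique solution $A(x,\cdot)\in L_2(0,\mu(x))$ of the main equation (\ref{28}), and I would define $\Omega(x)$ by (\ref{4}) and $\varphi(x,\lambda)$ by (\ref{3}). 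One must verify $\Omega\in L_2(0,\pi)$; this follows by transferring the $W_2^1$ regularity of $a$ to $A(x,\mu(x))$ through the main equation, so that the right-hand side of (\ref{4}) is square-integrable.

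I would then check that the reconstructed $\varphi$ genuinely solves the intended problem and recover its data. Lemma \ref{lm6} shows $B\varphi'(x,\lambda)+\Omega(x)\varphi(x,\lambda)=\lambda\rho(x)\varphi(x,\lambda)$ together with $\varphi_1(0,\lambda)=0,\ \varphi_2(0,\lambda)=-1$, so $\varphi$ is exactly the solution normalized as in (\ref{b}). Defining $\Delta(\lambda)$ by (\ref{a}) and $\beta_n$ by (\ref{7}), Lemma \ref{lm4} and Theorem \ref{thm7} produce real constants $h_1,h_2$ with $h_2>0$ such that $(\lambda_n+h_1)\varphi_1(\pi,\lambda_n)+h_2\varphi_2(\pi,\lambda_n)=0$ for every $n$; hence each $\lambda_n$ satisfies the boundary condition (\ref{2}) of the reconstructed problem $L(\Omega,h_1,h_2)$ and is therefore an eigenvalue. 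Since $\Delta$ built from (\ref{a}) has precisely the $\lambda_n$ as its zeros and the correct growth (\ref{12}), the spectrum is exactly $\{\lambda_n\}$. Finally, the reconstructed normalizing number $\int_0^\pi\left(|\varphi_1(x,\lambda_n)|^2+|\varphi_2(x,\lambda_n)|^2\right)\rho(x)\,dx+\tfrac{1}{h_2}|\varphi_1(\pi,\lambda_n)|^2$ is pinned to the prescribed $\alpha_n$ by the Parseval identity of Lemma \ref{lm7}, which was itself constructed from the given $\alpha_n$.

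The hard part will be the closing consistency argument in the last two steps. It is not enough that $\psi(x,\lambda_n)$ and $\varphi(x,\lambda_n)$ be proportional; I must confirm that the constants $h_1,h_2$ delivered by Theorem \ref{thm7} are the same $h_1,h_2$ implicitly carried by the $\Delta$ used to define $\beta_n$ via (\ref{7}), and that the computed normalizing numbers equal the prescribed $\alpha_n$ exactly rather than up to a scalar. The resolution is to note that the reconstructed objects $\Delta,\varphi,\beta_n,h_1,h_2$ satisfy the very same structural relations (\ref{5})--(\ref{7}) that hold for any genuine problem of the form (\ref{1}), (\ref{2}); combined with the fact that these relations determine $\alpha_n$ from $\dot{\Delta}(\lambda_n)$ and $\beta_n$, this forces $L(\Omega,h_1,h_2)$ to have spectral data precisely $\{\lambda_n,\alpha_n\}$, completing the sufficiency direction.
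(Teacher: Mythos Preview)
Your proposal is correct and follows essentially the same approach as the paper: necessity is Theorem \ref{thm1}, and sufficiency is the construction of Section 5, assembling Lemma \ref{thm4}, Lemma \ref{lm6}, Lemma \ref{lm7}, Lemma \ref{lm2}, Lemma \ref{lm4} and Theorem \ref{thm7} in exactly the order you describe. Your final paragraph on consistency---matching the reconstructed $h_1,h_2$ and normalizing numbers to the prescribed ones via the structural relations (\ref{5})--(\ref{7}) and Parseval---is in fact more explicit than the paper itself, which simply declares the theorem proved immediately after Theorem \ref{thm7}.
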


Algorithm of the construction of the function $\Omega(x)$ by spectral data $\left\{\lambda_{n},\alpha_{n}\right\}$,
$\left(n\in\mathbb{Z}\right)$ follows from the proof of the theorem: \\
1) By the given numbers $\left\{\lambda_{n},\alpha_{n}\right\}$,
$\left(n\in\mathbb{Z}\right)$ the functions $F_{0}(x,t)$ and $F(x,t)$ are 
respectively constructed by formula (\ref{29}) and (\ref{30}), \\
2) The function $A(x,t)$ is found from equation (\ref{28}), \\
3) $\Omega(x)$ is calculated by the formula (\ref{4}).


\subsection*{Acknowledgment}
This work is supported by The Scientific and Technological Research Council of Turkey 
(T\"{U}B\.{I}TAK).

\end{document}